\documentclass[10pt,notitlepage,twoside,a4paper]{amsart}
 \usepackage{amsfonts}

\usepackage{amsmath,amssymb,enumerate}

\usepackage{epsfig,fancyhdr,color}

\usepackage{amssymb}
\usepackage{amsmath,amsthm}
\usepackage{latexsym}
\usepackage{amscd}
\usepackage{psfrag}
\usepackage{graphicx}
\usepackage[latin1]{inputenc}
\usepackage[all]{xy}
\usepackage[mathcal]{eucal}

\definecolor{NoteColor}{rgb}{1,0,0}


\renewcommand{\textsc}{\textcolor{red}}

%


\newtheorem{theorem}{\rm\bf Theorem}[section]
\newtheorem{proposition}[theorem]{\rm\bf Proposition}
\newtheorem{lemma}[theorem]{\rm\bf Lemma}
\newtheorem{corollary}[theorem]{\rm\bf Corollary}
\newtheorem*{theorem 1}{\rm\bf Proposition 1}
\newtheorem*{theorem 2}{\rm\bf Proposition 2}

\theoremstyle{definition}

\theoremstyle{remark}

\def\interieur#1{\mathord{\mathop{\kern 0pt #1}\limits^\circ}}


\title[Fenchel-Nielsen coordinates]{The behaviour
of Fenchel-Nielsen distance under a change of pants decomposition}

\author{D. Alessandrini}
\address{Daniele Alessandrini,  Max-Plank-Institut f\"ur Mathematik, Vivatsgasse 7, 53111 Bonn, Germany}
\email{daniele.alessandrini@gmail.com}

\author{L. Liu}
\address{Lixin Liu, Department of Mathematics, Sun Yat-Sen University, 510275, Guangzhou, P. R. China, and Hausdorff Research Institute for Mathematics, 
Poppelsdorfer Allee 45
D-53115 Bonn
Germany}
\email{mcsllx@mail.sysu.edu.cn}

\author{A. Papadopoulos}
\address{Athanase Papadopoulos,  Universit{\'e} de Strasbourg and CNRS,
7 rue Ren\'e Descartes,
 67084 Strasbourg Cedex, France, and Hausdorff Research Institute for Mathematics, 
Poppelsdorfer Allee 45
D-53115 Bonn
Germany} \email{athanase.papadopoulos@math.unistra.fr}
\date{\today}

\author{W. Su}
\address{Weixu Su, Department of Mathematics, Sun Yat-Sen University, 510275, Guangzhou, P. R. China, and Hausdorff Research Institute for Mathematics, 
Poppelsdorfer Allee 45
D-53115 Bonn
Germany}
\email{su023411040@163.com}


\begin{document}

\begin{abstract} Given a topological orientable surface of finite or infinite type equipped with a pair of pants decomposition $\mathcal{P}$ and given a base complex structure $X$ on $S$,  there is an associated deformation space of complex structures on $S$, which we call  the Fenchel-Nielsen Teichm\"uller space associated to the pair $(\mathcal{P},X)$. This space carries a metric, which we call the Fenchel-Nielsen metric, defined using Fenchel-Nielsen coordinates. We studied this metric in the papers \cite{ALPSS},
\cite{various} and  \cite{local}, and we compared it to the classical Teichm\"uller metric (defined using quasi-conformal mappings) and to another metric, namely, the length spectrum, defined using ratios of hyperbolic lengths of simple closed curves metric. In the present paper,
we show that under a change of pair of pants decomposition,
the identity map between the corresponding Fenchel-Nielsen metrics is not necessarily bi-Lipschitz. The results complement results obtained in the previous papers and they show that these previous results are optimal.
\end{abstract}

\maketitle

\bigskip

\noindent AMS Mathematics Subject Classification:   32G15 ; 30F30 ; 30F60.

\medskip

\noindent Keywords:   Teichm\"uller space, Fenchel-Nielsen coordinates, Fenchel-Nielsen metric.

\medskip
 \noindent L. Liu and W. Su are partially supported by NSFC grant No. 10871211.
\bigskip

\tableofcontents 

\section{Introduction}\label{intro}

This paper is in the lineage of the papers
\cite{ALPSS},
\cite{various},  \cite{local}
and \cite{LP}, in which we studied and compared various metrics on Teichm\"uller spaces of surfaces of finite or of  infinite topological type. 
The first important thing to know in that respect is that some definitions that are equivalent to each other in the Teichm\"uller theory of surfaces of finite type are no more equivalent in the setting of surfaces of infinite type. Indeed, there are Teichm\"uller spaces that are associated to a surface of infinite type that are distinct (in the set-theoretic sense) and that would be equal  if the same definitions were made in  the case of a surface of finite type. Furtermore, each such space associated to a surface of finite or of infinite type  carries a natural distance function, and it is an interesting problem to study the relations between the various spaces, their distance functions and their topologies. 

It is necessary to have different  names to the various spaces that arise, and we briefly recall the terminology.

We use the name \emph{quasi-conformal Teichm\"uller space} for the ``classical'' Teichm\"uller space defined using quasi-conformal mappings and equipped with its Teichm\"uller metric.

In the paper \cite{ALPSS}, we introduced the \emph{Fenchel-Nielsen Teichm\"uller space}, a certain space of equivalence classes of complex structures on a surface, equipped with a distance, called the \emph{Fenchel-Nielsen distance}, defined using Fenchel-Nielsen coordinates. This Teichm\"uller space and its metric depend on the choice of a
pair of pants decomposition of the surface. The Fenchel-Nielsen Teichm\"uller space was a fundamental tool in our work, because it has explicit coordinates and an explicit distance function, and we used it in the other papers mentioned to describe and understand the other Teichm\"uller spaces. One of the results obtained was that if the lengths of all the boundary curves of the pair of pants decomposition are bounded above by a uniform constant then there is a set-theoretic equality between the Fenchel-Nielsen Teichm\"uller space and the quasiconformal  Teichm\"uller space. Furthermore, the identity map between the two Teichm\"uller spaces, equipped with their respective metrics, is a locally bi-Lipschitz homeomorphism. This gives an explicit description of the global topology and the local metric properties of the quasiconformal Teichm\"uller space.

In the paper \cite{local}, we obtained similar local comparison results between the 
Fenchel-Nielsen Teichm\"uller space and the so-called \emph{length spectrum Teichm\"uller space},  another deformation space of complex structures, whose definition and metric are based on the comparison of   lengths of simple closed curves between surfaces.

In the cases of surfaces of finite type, the various Teichm\"uller spaces coincide set-theretically, but there are still interesting questions on the local and global comparison of the metrics that are defined on these spaces.

In the present paper, we prove that under a change of the pair of pants decomposition,
the identity map between the corresponding Fenchel-Nielsen Teichm\"uller spaces is not bi-Lipschitz in general. This result holds for surfaces of finite and for those of infinite type. We first prove this result in the case where the surface is a torus with one hole or a sphere with four holes. In this case, the two pair of pants decompositions are obtained from each other by a single elementary move. The proof is based on explicit computations that use formulae obtained by Okai in \cite{Okai}. We then deduce an analogous result for arbitrary surfaces of finite or of infinite type.

 To state the theorems precisely, we now introduce some minimal amount of notation. We refer the reader to Section \ref{section:preliminaries} for more details.
 
  If $X_0$ is a surface equipped with a complex structure, we denote by $\mathcal{T}_{qc}(X_0)$ the quasi-conformal Teichm\"uller space of $X_0$, equipped with the quasi-conformal distance $d_{qc}$, and by $\mathcal{T}_{FN, \mathcal{P}}(X_0)$ the Fenchel-Nielsen Teichm\"uller space of $X_0$ with reference to the pair of pants decomposition $\mathcal{P}$, equipped with its associated Fenchel-Nielsen distance $d_{FN,\mathcal{P}}$. 
  
  In Section \ref{section:general_case} we prove the following:

\begin{theorem}         \label{thm:main}
Let $S$ be an orientable surface which is either of finite topological type of negative Euler characteristic and which is not a pair of pants, or of infinite topological type. Let $\mathcal{P}$ be a pair of pants decomposition of $S$. Then we have the following:
\begin{enumerate}
\item There exists another pair of pants decomposition $\mathcal{P'}$ such that for every base complex structure $X_0$ on $S$ we have $\mathcal{T}_{FN,\mathcal{P}}(X_0) = \mathcal{T}_{FN,\mathcal{P'}}(X_0)$ as sets, but the identity map between the two spaces equipped with their
 Fenchel-Nielsen distances $d_{FN,\mathcal{P}}$ and $d_{FN,\mathcal{P'}}$ respectively is not Lipschitz.
\item For every base complex structure $X_0$ on $S$, consider the space $T = \mathcal{T}_{qc}(X_0) \cap \mathcal{T}_{FN,\mathcal{P}}(X_0)$. Then the identity map from $(T,d_{FN,\mathcal{P}})$ to $(T, d_{qc})$ is not Lipschitz.
\item If $S$ is of infinite topological type, there exists a base complex structure $X_0$ on $S$ and another pair of pants decomposition $\mathcal{P'}$ such that if $T$ is the space $\mathcal{T}_{qc}(X_0) \cap \mathcal{T}_{FN,\mathcal{P}}(X_0)$, then the identity map from $(T,d_{FN,\mathcal{P}})$ to $(T, d_{FN,\mathcal{P'}})$ is not continuous.
\item If $S$ is of infinite topological type, there exists a base complex structure $X_0$ on $S$ such that if $T$ is the space $\mathcal{T}_{qc}(X_0) \cap \mathcal{T}_{FN,\mathcal{P}}(X_0)$, then the identity map from $(T,d_{FN,\mathcal{P}})$ to $(T, d_{qc})$ is not continuous.  
\end{enumerate}
\end{theorem}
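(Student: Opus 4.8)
The plan is to reduce the whole statement to an explicit computation on a one-holed torus $S_{1,1}$ or a four-holed sphere $S_{0,4}$, and then to transport the resulting metric distortion to an arbitrary surface by localizing the deformation inside a subsurface. In the model case $\mathcal{P}=\{\alpha\}$ is a single interior curve and $\mathcal{P}'=\{\beta\}$ is obtained from $\mathcal{P}$ by one elementary move, so that $\beta$ meets $\alpha$ minimally. The Fenchel--Nielsen coordinates then reduce to a single length--twist pair, $(\ell_\alpha,\tau_\alpha)$ for $\mathcal{P}$ and $(\ell_\beta,\tau_\beta)$ for $\mathcal{P}'$, the boundary lengths being held fixed, and each of $d_{FN,\mathcal{P}}$, $d_{FN,\mathcal{P}'}$ becomes the maximum of a logarithmic length term and a twist term in the corresponding coordinates. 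The formulae of Okai in \cite{Okai} express $(\ell_\beta,\tau_\beta)$ as explicit functions of $(\ell_\alpha,\tau_\alpha)$, and it is their asymptotic behaviour that drives everything.

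The engine of parts (1) and (2) is the following family. Fix a small length $\ell_\alpha=\epsilon$ and consider two structures $X$, $Y$ that differ only by a twist of bounded amount along $\alpha$. Then $d_{FN,\mathcal{P}}(X,Y)$ stays bounded, since the length coordinate is unchanged and the twist coordinate moves by a bounded amount. On the other hand, when $\ell_\alpha$ is small the curve $\beta$ is forced to be long (collar lemma), and a bounded change of twist along $\alpha$ changes $\ell_\beta$, hence $\log\ell_\beta$, by an amount that grows as $\epsilon\to 0$; feeding Okai's formulae into $d_{FN,\mathcal{P}'}$ shows that $d_{FN,\mathcal{P}'}(X,Y)\to\infty$ while $d_{FN,\mathcal{P}}(X,Y)$ remains bounded, which gives (1). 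For (2) I use the same family together with the standard length-ratio lower bound $d_{qc}(X,Y)\geq \tfrac{1}{2}\sup_\gamma|\log(\ell_\gamma(X)/\ell_\gamma(Y))|$ applied to $\gamma=\beta$: the large change in $\ell_\beta$ forces $d_{qc}(X,Y)$ to be large against bounded $d_{FN,\mathcal{P}}(X,Y)$. One must check that the constructed structures lie in $T=\mathcal{T}_{qc}(X_0)\cap\mathcal{T}_{FN,\mathcal{P}}(X_0)$, which holds because the boundary lengths are kept uniformly bounded.

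To pass from the model case to an arbitrary $S$, I choose a subsurface $\Sigma\subset S$ homeomorphic to $S_{1,1}$ or $S_{0,4}$ whose boundary consists of curves of $\mathcal{P}$, let $\mathcal{P}'$ agree with $\mathcal{P}$ outside $\Sigma$ and differ from it by the model elementary move inside $\Sigma$, and perform the deformations above only on the coordinates supported in $\Sigma$, keeping all other length and twist coordinates fixed. Since both Fenchel--Nielsen distances and $d_{qc}$ are then governed by the curves meeting $\Sigma$, the estimates reduce to the model computation, giving (1) and (2) in general. For parts (3) and (4), which require $S$ of infinite type, I would spread the construction over infinitely many pairwise disjoint model subsurfaces $\Sigma_k$ going to infinity, choosing the base structure $X_0$ with lengths $\ell_{\alpha_k}\to 0$ and choosing twist amounts so that the per-piece contribution to $d_{FN,\mathcal{P}}$ tends to $0$, producing a sequence converging to $X_0$ for $d_{FN,\mathcal{P}}$, while the per-piece distortion stays bounded below; because $d_{FN,\mathcal{P}'}$ and $d_{qc}$ are suprema over all curves, this keeps them bounded away from $0$, contradicting continuity of the identity map.

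The main obstacle is the sharp asymptotic analysis of Okai's formulae: I must control precisely how $\log\ell_\beta$ and $\tau_\beta$ respond to a twist along $\alpha$ as $\ell_\alpha\to 0$, with enough uniformity to make the ratio $d_{FN,\mathcal{P}'}/d_{FN,\mathcal{P}}$, respectively $d_{qc}/d_{FN,\mathcal{P}}$, provably unbounded. A secondary difficulty, specific to parts (3) and (4), is the simultaneous bookkeeping over the infinitely many subsurfaces $\Sigma_k$ needed to force one distance to $0$ while keeping the other bounded below.
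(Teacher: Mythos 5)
Your overall architecture (model computation on $S_{1,1}$/$S_{0,4}$ via Okai's formulae, localization in a subsurface, infinitely many disjoint pieces for the infinite-type statements) is the paper's, but your central asymptotic claim is backwards, and this breaks the argument. With $\ell_\alpha=\epsilon$ small and a twist of \emph{bounded} amount $t$ along $\alpha$, Okai's length formula gives (in the one-holed torus case) $\cosh(\ell_\beta^t/2)=\cosh(t/2)\cosh(\ell_\beta/2)$, so the \emph{additive} change of $\ell_\beta$ is bounded (about $2\log\cosh(t/2)$), while by the collar lemma $\ell_\beta\geq|\log\epsilon|\to\infty$; hence $\log(\ell_\beta^t/\ell_\beta)=O(1/|\log\epsilon|)\to 0$, and the paper's Proposition \ref{prop:length-twist} likewise gives $|\tau_\beta^t-\tau_\beta|=O(\epsilon)\to 0$. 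Thus $d_{FN,\mathcal{P}'}(X,X^t)\to 0$ while $d_{FN,\mathcal{P}}(X,X^t)=|t|$: the new chart \emph{collapses} these distances, it does not blow them up. Part (1) survives this reversal (there is no constant $C$ with $d_{FN,\mathcal{P}}\leq C\,d_{FN,\mathcal{P}'}$, and by symmetry of the elementary move the labels can be swapped), but your mechanism for part (2) is destroyed: since for every curve $\gamma$ one has $|\log(\ell_\gamma(X^t)/\ell_\gamma(X))|\lesssim i(\gamma,\alpha)|t|/\bigl(i(\gamma,\alpha)|\log\epsilon|\bigr)\to 0$, the entire length-spectrum distance tends to zero along this family, so the length-ratio lower bound $d_{qc}\geq\tfrac12\sup_\gamma|\log(\ell_\gamma(X)/\ell_\gamma(Y))|$ applied to $\beta$ (or to any curve) yields nothing. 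This is precisely the regime where $d_{qc}$ is \emph{not} detected by length ratios; the paper instead invokes a genuinely quasiconformal lower bound, $d_{qc}(X_n,X_n^t)\geq C|t|$ for twists along short curves (Theorem 7.6 of \cite{ALPSS}, a Minsky product-regions type estimate), and even then must argue around the failure of the upper-boundedness hypothesis of that theorem. Without some such input, your parts (2) and (4) are unproven.

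There is a second genuine gap in your localization step. You assert that performing the twist along $\alpha$ inside $\Sigma$ keeps all coordinates outside $\Sigma$ fixed. That is true for the $\mathcal{P}$-coordinates and for all $\mathcal{P}'$-length parameters, but it is \emph{false} for the $\mathcal{P}'$-twist parameters along the boundary curves $C_1,\dots,C_4$ of $\Sigma$: in the $\mathcal{P}'$-chart these twists change under the twist along $\alpha$, because the chart is based on $\alpha'$, whose geometry moves. The paper devotes Lemma \ref{lemma:othertwist} to showing $|\tau'_{X^t}(C_j)-\tau'_X(C_j)|\to 0$ as $\ell_X(\alpha)\to 0$, using Wolpert's twist-derivative formula together with Kerckhoff's monotonicity of intersection angles, applied to an auxiliary curve replaced by a high power of a Dehn twist about $C_j$ so that $\cos\theta\geq\tfrac12$; your proposal is silent on this point, and without it the reduction ``to the model computation'' does not go through. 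Finally, your bookkeeping for (3)--(4) does not work as sketched: if you take twist amounts $t_k\to 0$ so that the per-piece contribution to $d_{FN,\mathcal{P}}$ tends to zero, then the per-piece $\mathcal{P}'$-distortion also tends to zero (it is $O(t_k/|\log\epsilon_k|)+O(\epsilon_k)$), so it cannot stay bounded below. The working construction is the paper's: fix $t$, choose $X_0$ with $\ell_{X_0}(\alpha_i)\to 0$ over infinitely many disjoint pieces, and twist $X_0$ along the single curve $\alpha_i$ to get $X_i$; then $d_{FN,\mathcal{P}}(X_0,X_i)\equiv|t|$ while $d_{FN,\mathcal{P}'}(X_0,X_i)\to 0$, which gives the discontinuity statements after relabeling $\mathcal{P}\leftrightarrow\mathcal{P}'$ (and, for (4), again the quasiconformal lower bound from \cite{ALPSS}).
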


 From this result, we can see that the local comparison results we obtained in the papers \cite{ALPSS} and \cite{local} are optimal in the sense that they cannot be extended to global comparison results, since the global metric geometry of the Fenchel-Nielsen distance depends on the choice of the pair of pants decomposition.

\section{The Fenchel-Nielsen  metric}   \label{section:preliminaries}

In this section, we recall a few facts from our previous papers, which will help making the present paper self-contained.

Let $S$ be an orientable connected surface of finite or of infinite topological type, that can have punctures and/or  compact boundary components. The complex structures that we consider on $S$ are such that each boundary component has a regular neighborhood that is bi-holomorphically equivalent to a bounded cylinder and each puncture has a neighborhood that is bi-holomorphically equivalent to a punctured disc.

We start by reviewing the definition of the Fenchel-Nielsen metric on the Teichm\"uller space $\mathcal{T}(S)$ of $S$. The definition depends on the choice of a
pair of pants decomposition of $S$.

A pair of pants decomposition $\mathcal{P}=\{C_i\,\ i=1,2,\ldots\}$ of $S$ is a decomposition into generalized pairs of pants glued along their boundary components, where a generalized pair of pants is a sphere with three holes,  a hole  being either a point removed (leaving a puncture on the pair of pants) or an open disc removed (leaving a boundary component on the pair of pants). The curves $C_i$ in the above definition are the closed curves on $S$ (including the boundary components) that define the decomposition. It is well-known that every surface of finite topological type with negative Euler characteristic admits a pair of pants decomposition. It also follows from the classification of surfaces of infinite type that every such surface admits a pair of pants decomposition \cite{ALPSS}.

To every complex structure on $S$ we can associate a hyperbolic metric, called the \emph{intrinsic} metric, which was defined by Bers in \cite{Bers}. This metric is conformally equivalent to the given complex structure, and every boundary curve is a geodesic for that metric. The definition of the intrinsic metric is recalled in \cite{ALPSS}. In the sequel, when we talk about geometric objects (geodesics, length, angles, etc.) associated to a complex structure on $S$, it is understood that these are associated to the intrinsic metric on $S$. A pair of pants decomposition $\mathcal{P}=\{C_i\}$ of the surface $S$ equipped with its intrinsic metric is said to be \emph{geodesic} if each $C_i$ is a geodesic closed curve in $S$ with respect to this metric. In \cite{ALPSS} we proved that every topological pair of pants decomposition of $S$ is homotopic to a unique geodesic pair of pants decomposition. (Note that this is not true for general hyperbolic metrics on $S$. For example, the Poincar\'e metric of $S$ may not satisfy this property if this metric is different from the intrinsic metric, and this may happen;  we discussed this fact in \cite{ALPSS}.)

Given a closed curve $C$ on the surface $S$, we make the convention that we shall call also $C$ the unique geodesic representative of this closed curve with reference to the intrinsic metric.

Given a complex structure $X$ on $S$ and a geodesic pair of pants decomposition $\mathcal{P}=\{C_i\}$ of this surface, then for any closed geodesic $C_i\in \mathcal{P}$ there is a well-defined {\it length parameter} $l_X(C_i)$, which is the length of this closed geodesic, and a  {\it twist parameter} $\tau_X(C_i)$, which is defined only if $C_i$ is not the homotopy class of a boundary component of $S$. The quantity $\tau_X(C_i)$ is a measure of the relative twist amount along the geodesic $C_i$ between the two generalized hyperbolic pair of pants (which might be the same) having this geodesic in common. In this paper, the value $\tau_X(C_i)$ is a signed distance-parameter, that is, it represents an amount of twisting in terms of a distance measured along the curve, as opposed to an angle of twisting parameter (whose absolute value  would vary by $2\pi$ after a complete Dehn twist).

For any complex structure $X$ on $S$, its {\it Fenchel-Nielsen parameter} relative to $\mathcal{P}$ is a collection of pairs
\[\left((l_X(C_i),\tau_X(C_i))\right)_{i=1,2,\ldots}\]
where it is understood that if $C_i$ is a boundary geodesic, then it has no associated twist parameter, and instead of a pair $(l_X(C_i),\tau_X(C_i))$, we have a single parameter $l_X(C_i)$.

Given two complex structures $X$ and $X'$ on $S$, their {\it Fenchel-Nielsen distance} (with respect to $\mathcal{P}$) is defined as

\begin{equation}\label{def:FND}
d_{FN}(X,X')=\sup_{i=1,2,\ldots} \max\left(\left\vert \log \frac{l_X(C_i)}{l_{X'}(C_i)}\right\vert, \vert  \tau_X(C_i)-\tau_{X'}(C_i)\vert \right) ,
\end{equation}
again with the convention that if $C_i$ is a boundary component of $S$ and therefore has no associated twist parameter, we consider only the first factor.

 If $S$ is a surface of finite topological type, then $d_{FN}(X,X')$ is always finite, and the function $d_{FN}$ defines a distance on the Teichm\"uller space of $S$, which we will denote simply by $\mathcal{T}(S)$. If, instead, $S$ is of infinite topological type, $d_{FN}(X,X')$ can assume the value infinity. In this case we fix a base complex structure $X_0$ on $S$, and we define the Fenchel-Nielsen Teichm\"uller space of $X_0$ as the set of homotopy classes of complex structures $X$ on $S$ such that $d_{FN}(X_0, X)$ is finite. We denote this space by $\mathcal{T}_{FN}(X_0)$. The function $d_{FN}$ is a distance function in the usual sense on this space, and it makes it isometric to the sequence space $\ell^\infty$. 

 When it is important to stress on the dependence on a given pair of pants decomposition $\mathcal{P}$, we shall denote the Teichm\"uller space by $\mathcal{T}_{FN, \mathcal{P}}(X_0)$ (this dependence of the space on $\mathcal{P}$ may happen only for surfaces of infinite type), and the Fenchel-Nielsen distance by $d_{FN,\mathcal{P}}$ (the distance function depends on $\mathcal{P}$ also for a surface of finite type). 
 
 If a pair of pants decomposition  $\mathcal{P}$ is obtained from another pair of pants decomposition  $\mathcal{P}'$ by a finite number of elementary moves (represented in Figures \ref{torus} and \ref{sphere1} below), then for any basepoint $X_0$ we have the set-theoretic equality  $\mathcal{T}_{FN, \mathcal{P}}(X_0) = \mathcal{T}_{FN, \mathcal{P}'}(X_0)$. 
 
  In the last section of this paper we will prove that for every surface $S$, there exist two pair of pants decompositions $\mathcal{P}, \mathcal{P}'$ such that for every base complex structure $X_0$, the Fenchel-Nielsen Teichm\"uller spaces are the same (that is, we have a set-theoretical equality $\mathcal{T}_{FN, \mathcal{P}}(X_0) = \mathcal{T}_{FN, \mathcal{P}'}(X_0)$), but the identity map between the two spaces is not Lipschitz with reference to the two Fenchel-Nielsen distances $d_{FN,\mathcal{P}}$ and $d_{FN,\mathcal{P}'}$.

\section{The effect of an elementary move on the torus with one hole}\label{elementary-torus}

In this section, the surface $S=S_{1,1}$ is a torus with one hole, where the hole can be either a boundary component or a puncture.
A pair of pants decomposition of $S$ is determined by
a unique simple closed curve on $S$ which is not homotopic to a point or to the hole.

We consider two distinct pair of pants decomposition $\{\alpha\}$ and $\{\alpha'\}$ of $S$ defined by two essential simple closed curves $\alpha$ and $\alpha'$  satisfying
$i(\alpha,\alpha')=1$, as represented in Figure \ref{torus}.

  \begin{figure}[ht!]
\centering
\psfrag{a}{\small $\alpha$}
\psfrag{b}{\small $\alpha'$}
\psfrag{l}{\small $l_0$}
\psfrag{1}{\small (a)}
\psfrag{2}{\small (b)}
\includegraphics[width=.50\linewidth]{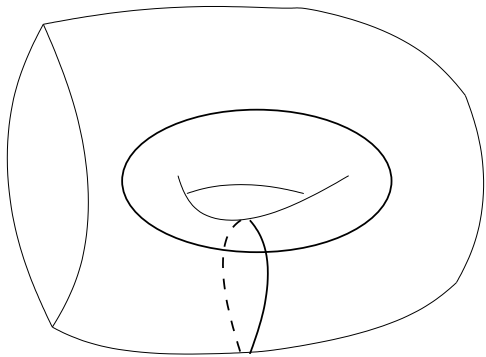}
\caption{\small {The two curves $\alpha$ and $\alpha'$ intersect at one point. An elementary move replaces one of these curves by the other one.}}
\label{torus}
\end{figure}

Let $X$ be a complex structure on $S$, equipped with its intrinsic hyperbolic metric. At the hole, $X$ can have geodesic boundary, and in this case we denote by $l_0$ its length, or it can have a cusp, and in this case we write $l_0 = 0$. We denote by $(l,\tau)$ the length and twist parameters of the curve $\alpha$, for the decomposition $\{\alpha\}$, and by $(l',\tau')$ the length and twist parameters of the curve $\alpha'$, for the decomposition $\{\alpha'\}$. We need a formula relating these values. This was done by Okai (\cite{Okai}) in the case where the hole is a boundary component, but we also need similar formulae for the case of a cusp. In the following proposition we obtain this with a continuity argument. In these formulae, the case where $l_0=0$ means that at the hole the surface has a cusp.

\begin{proposition}        \label{prop:Okaicusps}
With the above notation we have, for all $l_0\geq 0$,
$$\cosh(l'/2) = \sinh^{-1}(l/2)\cosh(\tau/2){\left(\frac{\cosh(l)+\cosh(l_0/2)}{2}  \right)}^{1/2} $$
$$\cosh(\tau'/2) = \cosh(l/2) {\left\{ \cosh^2(\tau/2) (\cosh(l)+\cosh(l_0/2)) - 2\sinh^2(l/2) \right\}}^{1/2} $$ 
$${\left\{ \cosh^2(\tau/2) (\cosh^2(l/2)+\cosh^2(l_0/2)) + \sinh^2(l/2)\cosh(l_0/2)  \right\}}^{1/2} .$$
\end{proposition}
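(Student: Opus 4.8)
The plan is to reduce the cusp case $l_0=0$ to the boundary case $l_0>0$, in which the three identities are exactly those established by Okai in \cite{Okai}, by means of a continuity argument in the parameter $l_0$. First I would recall the geometry behind Okai's computation: cutting $S_{1,1}$ along the geodesic $\alpha$ produces a hyperbolic pair of pants whose three boundary geodesics have lengths $l$, $l$ and $l_0$, and the surface $X$ is recovered by regluing the two length-$l$ cuffs with twist amount $\tau$. Since $i(\alpha,\alpha')=1$, the geodesic representative of $\alpha'$ meets $\alpha$ exactly once, so it can be analysed inside this pair of pants using the decomposition into right-angled hexagons and the hyperbolic formulas for common perpendiculars; this is what yields, for $l_0>0$, the stated expressions for $\cosh(l'/2)$ and $\cosh(\tau'/2)$ in terms of $(l,\tau,l_0)$.

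For the cusp case I would fix $(l,\tau)$ and regard $l_0\geq 0$ as a variable. The hyperbolic pair of pants $P(l,l,l_0)$ with the prescribed cuff lengths exists, is unique, and depends continuously on $(l,l_0)\in\mathbb{R}_{>0}\times\mathbb{R}_{\geq 0}$ (real-analytically for $l_0>0$), the value $l_0=0$ corresponding to the degenerate pair of pants in which the third cuff is replaced by a cusp. Gluing the two length-$l$ cuffs with twist $\tau$ then produces a hyperbolic structure $X=X(l,\tau,l_0)$ on $S_{1,1}$ that varies continuously with $l_0$, converging as $l_0\to 0^{+}$ to the cusped structure carrying the same data $(l,\tau)$. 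The length $l'=l_X(\alpha')$ and the twist $\tau'=\tau_X(\alpha')$ of $\alpha'$ are continuous functions of the structure $X$, hence continuous functions of $l_0$ on $[0,\infty)$. On the right-hand side of each identity every term is visibly continuous in $l_0$, and $\cosh(l_0/2)\to 1$ as $l_0\to 0^{+}$. Since the three identities hold for all $l_0>0$ by Okai's theorem, letting $l_0\to 0^{+}$ establishes them at $l_0=0$, which is precisely the cusp case.

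The main obstacle is to justify this continuity at the degenerate value $l_0=0$, where a boundary geodesic pinches to a cusp. I would work in the Teichm\"uller space of the bordered surface $S_{1,1}$ with the convention recalled in Section \ref{section:preliminaries}, under which a cusp is the limit of boundary geodesics whose length tends to $0$, and check that the Fenchel-Nielsen data extend continuously over this limit. The delicate point is the twist parameter $\tau'$, which is measured as a signed distance along $\alpha'$ between feet of common perpendiculars; I would verify that these feet, and hence the distance defining $\tau'$, vary continuously as the third cuff degenerates, which follows from the continuity of the orthogeodesics and of the collar geometry in the limiting cusped pair of pants. Once this continuity is secured, no new computation is required: the cusp formulas are the $l_0=0$ specializations of Okai's, obtained by replacing $\cosh(l_0/2)$ with $1$.
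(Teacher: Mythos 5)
Your proof is correct, but it implements the continuity argument by a genuinely different device than the paper. The paper also reduces the cusp case to Okai's formulas by continuity, but instead of degenerating $l_0\to 0^{+}$ inside Fenchel--Nielsen coordinates, it passes to the shear coordinates $(s_a,s_b,s_c)$ of an ideal triangulation of $S_{1,1}$: these form a single chart containing both the bordered and the cusped structures, with $l_0=|s_a+s_b+s_c|$, so the cusp locus $s_a+s_b+s_c=0$ lies in the closure of the dense locus $l_0>0$; the quantities $l,|\tau|,l',|\tau'|$ are continuous functions of the shears (lengths because the holonomy matrix entries are continuous in the shears, absolute twists because they can be written as continuous functions of lengths of suitable simple closed curves), and two continuous functions agreeing on a dense set agree everywhere. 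Your route instead fixes $(l,\tau)$ and pinches the third cuff, which is geometrically transparent but concentrates the burden exactly where you locate it: continuity of $\tau'$, defined via feet of orthogeodesics, as a boundary geodesic degenerates to a cusp. That step can be completed as you sketch (the common perpendicular between $\alpha'$ and the degenerating cuff converges to the orthogeodesic ray into the cusp, with converging foot), or discharged more cheaply by borrowing the paper's observation that $|\tau'|$ is a continuous function of the lengths of finitely many simple closed curves, whose continuity in $(l,\tau,l_0)\in\mathbb{R}_{>0}\times\mathbb{R}\times\mathbb{R}_{\geq 0}$ follows from continuity of holonomy traces. Note also that since $\tau$ and $\tau'$ enter the identities only through $\cosh(\tau/2)$ and $\cosh(\tau'/2)$, only their absolute values matter, so no sign bookkeeping is needed in the limit. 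What each approach buys: the paper's chart replaces a one-parameter degeneration by a density argument, so nothing need be said about geometric convergence of hyperbolic structures; yours stays entirely within the pants-and-hexagons picture underlying Okai's computation and makes the specialization $\cosh(l_0/2)\to 1$ explicit, at the cost of having to verify the boundary-to-cusp limit by hand.
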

\begin{proof}
When $l_0 > 0$, these formulae are proved in \cite{Okai}. To see that they also hold when $l_0 = 0$ (the case of a  cusp) we use the shear coordinates relative to an ideal triangulation.
We recall that in the case of a torus with one hole equipped with a complex structure (and the corresponding intrinsic metric), an ideal triangulation has three edges, say $a,b,c$, and associated to every edge there is a real number, called the shear parameter. Thus,  we have three numbers, $s_a,s_b,s_c$. The boundary length $l_0$ can be read from the shear coordinates:  $l_0 = |s_a+s_b+s_c|$ (see \cite[Prop. 3.4.21]{Thurston}), and the cusp corresponds to the case where $s_a+s_b+s_c=0$. Here we need the property that the parameters $l, |\tau|, l', |\tau'|$ can also be written as continuous functions of the shear coordinates. To see this one can show that for every element $\gamma$ of the fundamental group of the surface, the coefficients of the matrix corresponding to $\gamma$ in the holonomy representation can be written as continuous functions of the shear coordinates. The length of the corresponding closed geodesic in the surface is a continuous function of the trace of this matrix, hence all simple closed curve lengths are continuous functions of the shear coordinates. For the twist parameters, we note that their absolute values can be written as a continuous function of the lengths of some simple closed curves.

To show that the above formulae are valid for $l_0 = 0$, we just need to note that both the left hand side and the right hand side of the equations are continuous functions of the shear coordinates. It is known that shear coordinates extend to the case of surfaces with cusps (see  \cite[Chapter 3]{Thurston}), see also \cite{PT}).
Then as these functions are equal when $l_0 > 0$, i.e. when $s_a+s_b+s_c \neq 0$, and as this subset is dense in the space of shear coordinates, these functions  are equal everywhere, that is, including the case where $l_0 = 0$.      
\end{proof}

For simplicity, we choose a complex structure $X$ on the torus with one hole such that its intrinsic metric has the property that the two closed geodesics in the homotopy classes of $\alpha$
and $\alpha'$ meet perpendicularly. To see that such a choice is possible, we can start with an arbitrary
complex structure on $S$ (equipped with its intrinsic metric) and we cut this surface along the closed geodesic $\alpha$; in this way $\alpha'$ is cut into an essential geodesic arc, which is homotopic to a unique geodesic arc $\beta$ that is perpendicular to the two boundary components of $Y\setminus \{\alpha\}$ that arise from cutting the surface along $\alpha$.
Then we glue back the two components in such a way that in the resulting surface the
two endpoints of $\beta$ match. We obtain a complex structure with the desired property. Such a complex structure corresponds to the case $\tau=0$ in the
  notation of Okai \cite{Okai}. 

Performing a Fenchel-Nielsen twist of
magnitude $t$ along $\alpha$, we obtain from $X$ a new complex structure $X^t$
that has Fenchel-Nielsen coordinates denoted by
$(l,\tau_t)$ in the coordinate system associated to $\{\alpha\}$ and $(l'_t,\tau'_t)$ in the
coordinate system associated to $\{\alpha'\}$. The coordinates of the complex structure $X$ that we have chosen are $(l,0)$ and $(l',0)$ in the bases $\{\alpha\}$ and $\{\alpha'\}$ respectively.

Since $X^t$ is obtained by a time-$t$ twist along the curve $\alpha$, we have $\tau_t=t$ for all $t$.

We need to estimate $l'_t$ and $\tau'_t$.

By the formula for length in Proposition \ref{prop:Okaicusps}, we have
$$\cosh(l'/2)=\sinh^{-1}(l/2)\left(\frac{\cosh(l)+\cosh(l_0/2)}{2}\right)^{1/2}$$
and
$$\cosh(l'_t/2)=\sinh^{-1}(l/2)\cosh(t/2)\left(\frac{\cosh(l)+\cosh(l_0/2)}{2}\right)^{1/2}.$$

Thus, we have 
\[
 \cosh(t/2)=
\frac{\cosh(l'_t/2)}{\cosh(l'/2)}=\frac{e^{l'_t/2-l'/2}+e^{-l'_t/2-l'/2}}{1+e^{-l'}}.
\]
Using the fact that $x\leq e^x$, we obtain
\[0\leq l'_t/2-l'/2\leq e^{l'_t/2-l'/2}\leq \cosh(t/2)(1+e^{-l'})\]
which implies
\[0 < l'_t/2 \leq l'/2+ \cosh(t/2)(1+e^{-l'}).\]
As a result, and using $l'_t\geq l'$ (which follows from our hypothesis that $\alpha$ and $\alpha'$ meet perpendicularly for $t=0$), we have
\begin{eqnarray*}
0\leq \log\frac{l'_t}{l'} &\leq& \log\frac{l'/2+\cosh(t/2)(1+e^{-l'})}{l'/2} \\
&=&\log \left( 1+\frac{2\cosh(t/2)(1+e^{-l'})}{l'}\right) \\
&\leq& \frac{2\cosh(t/2)(1+e^{-l'})}{l'}.
\end{eqnarray*}

Now we estimate the twist parameter, using the formula for the twist in Proposition \ref{prop:Okaicusps}. For the purpose of the computations, we use the following notation:
$$A=\cosh^2 (l/2) \left( \cosh^2(t/2) \left( \cosh(l)+\cosh(l_0/2)\right) - 2\sinh^2(l/2) \right)$$
and
$$B= \cosh^2(t/2)\left(\cosh^2(l/2) +\cosh (l_0/2)\right)
 +\sinh^2 (l/2)\cosh(l_0/2).$$
 
The quantities $A$ and $B$ are functions of $t$ and $l$, and we have
$$\cosh^2(\tau'_t/2)= \frac{A}{B}.$$
We shall focus on the case where $l$ is small and $t,l_0$ are bounded.

Consider the second order expansions near $l=0$.

$$\cosh^2 (l/2)=1+l^2/4+O(l^4),$$
$$\cosh (l)=1+l^2/2+O(l^4),$$
$$\sinh^2 (l/2)=l^2/4+O(l^4).$$

Then we have
 
\begin{eqnarray*}
A&=& \left(1+l^2/4+O(l^4)\right)\times  \\
&\times&
\left(\left( \cosh^2(t/2)
\left( 1+l^2/2+O(l^4)+\cosh(l_0/2)\right) - \left(l^2/4+O(l^4)\right) \right)\right) \\
&=&\cosh^2(t/2)\left(1+\cosh(l_0/2)\right)\\&+&
\left(3\cosh^2(t/2)-2+\cosh^2(t/2)\cosh(l_0/2) \right)\frac{l^2}{4}+
 O(l^4)
\end{eqnarray*}

and

\begin{eqnarray*}
B&=&
  \cosh^2(t/2)  \left(1+l^2/4+O(l^4) +\cosh (l_0/2)\right)
 +\left(l^2/4+O(l^4)\right) \cosh(l_0/2) \\
 &=&\cosh^2(t/2)\left(1+\cosh(l_0/2)\right)+\left(\cosh^2(t/2)+\cosh(l_0/2) \right)\frac{l^2}{4}+O(l^4).
 \end{eqnarray*}

This gives
 
\begin{eqnarray*}
A-B &=& \left( 2\cosh^2(t/2)-2+\cosh^2(t/2)\cosh(l_0/2)-\cosh(l_0/2) \right)\frac{l^2}{4}+O(l^4)\\
 &=& \frac{1}{4} (2 + \cosh(l_0 / 2))\sinh^2(t/2) l^2 + O(l^4).
\end{eqnarray*}

 Note that the term $O(l^4)$ in the previous formula depends on $l_0$ and $|t|$ only via continuous functions of $l_0,|t|$. In particular, there exists an $\epsilon > 0$ such that for $l < \epsilon$ we have
\[
A-B\leq  Cl^2\]
where $C$ is a constant that depends only on $\epsilon$ and the upper bound of $l_0, |t|$.

Note that 
 $A\geq B>1$. From this and the properties of the logarithm function we obtain
 
\[\log \cosh^2(\tau'_t/2) = \log \frac{A}{B}= \log A -\log B \leq A-B.\]
Thus, we have 
 \[\log \cosh^2(\tau'_t/2) \leq Cl^2.\]

From the continuity of the $\cosh$ function, if $|\tau'_t|$ is bounded, then there is a constant $K$ that depends only on the upper bound for $|\tau'_t|$ such that $|\tau'_t|\leq K \log \cosh(\tau'_t/2)$.
Therefore,
$$|\tau'_t|\leq Ml$$
where $M$ is a constant that depends only on the upper bound of $l, l_0, |t|$.

We record the above results for the length and twist parameters in the following proposition.
\begin{proposition}\label{propositon:1-hold}
Let $X$ be a complex structure on $S_{1,1}$ with the following properties:
\begin{enumerate}
\item $X$ has either geodesic boundary of length $l_0$, or a cusp (and in the latter case we write $l_0 = 0$);
\item there exist a pair of perpendicular simple closed geodesics $\alpha, \alpha'$ on $X$ with $i(\alpha,\alpha')=1$.
\end{enumerate}
Let $X^t$ be the complex structure obtained from $X$ by performing a Fenchel-Nielsen twist of
magnitude $t$ along $\alpha$. Let $(l,\tau_t)$ and $(l'_t,\tau'_t)$ be the 
  Fenchel-Nielsen coordinates of $X^t$ 
in the coordinate systems associated to $\{\alpha\}$  and $\{\alpha'\}$ respectively. (Note that $l'_0= l'$.)

Assume $l_0$ and $|t|$ are bounded above by some constant $L>0$. Then there exist constants $M$ and $\epsilon_0>0$, both depending only on $L$, such that for all $l\leq \epsilon_0$, we have
 \begin{equation}\label{eq:col}
 \log\frac{ l'_t}{l'}\leq \frac{2\cosh(t/2)(1+e^{-l'})}{l'}\leq \frac{4\cosh (t/2)}{\vert \log l\vert}
 \end{equation}
 and
\[|\tau'_t|\leq Ml.\]
\end{proposition}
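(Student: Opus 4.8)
The plan is to observe that the two displayed estimates are, for the most part, a bookkeeping summary of the chain of inequalities carried out immediately before the statement, so the proof reduces to collecting those bounds and to checking that all constants can be chosen depending only on $L$. The left inequality for the length, $\log(l'_t/l') \leq 2\cosh(t/2)(1+e^{-l'})/l'$, is produced verbatim by the computation preceding the proposition and requires nothing further. The genuinely new ingredient is the right inequality, and the key to it is the lower bound $l' \geq |\log l|$ for small $l$.

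To obtain this lower bound I would use the length formula of Proposition \ref{prop:Okaicusps} at $t = 0$. Since $\cosh l \geq 1$ and $\cosh(l_0/2) \geq 1$, that formula gives $\cosh(l'/2) \geq 1/\sinh(l/2)$. Combining with the elementary bound $\cosh(l'/2) \leq e^{l'/2}$ yields $e^{l'/2} \geq 1/\sinh(l/2)$, hence $l' \geq 2\log\bigl(1/\sinh(l/2)\bigr)$. Using $\sinh(l/2) \leq (l/2)\cosh(l/2) \leq l$ for $l \leq 1$, this gives $l' \geq 2|\log l| \geq |\log l|$. Since $1 + e^{-l'} \leq 2$, I can then chain
\[
\frac{2\cosh(t/2)(1+e^{-l'})}{l'} \leq \frac{4\cosh(t/2)}{l'} \leq \frac{4\cosh(t/2)}{|\log l|},
\]
which is the second inequality of the length estimate.

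For the twist estimate I would rely on the second-order expansion already computed, which gives $A - B \leq C l^2$ for $l < \epsilon$, with $C$ depending only on the upper bounds of $l_0$ and $|t|$, because the $O(l^4)$ remainder depends on these parameters only through continuous functions and is therefore uniformly controlled on the compact range $\{l_0 \leq L,\ |t| \leq L\}$. Since $A \geq B > 1$, the inequality $\log(A/B) \leq A - B$ turns this into $\log\cosh^2(\tau'_t/2) \leq C l^2$. To pass to $|\tau'_t| \leq Ml$ I would note that $A/B \to 1$ as $l \to 0$ uniformly in the parameters, so $|\tau'_t|$ stays bounded on the range under consideration; then the function $x \mapsto |x|/\sqrt{\log\cosh(x/2)}$, which extends continuously at $x = 0$ with value $2\sqrt{2}$, is bounded by some constant $K$ on the relevant interval, giving $|\tau'_t| \leq K\sqrt{\log\cosh(\tau'_t/2)} \leq K\sqrt{C}\,l$. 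Setting $M = K\sqrt{C}$ and $\epsilon_0 = \min(\epsilon, 1)$ completes the argument.

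The step I expect to require the most care is the uniformity bookkeeping: confirming that $\epsilon$, $C$, and $K$ (hence $M$ and $\epsilon_0$) depend only on $L$ and not on the specific triple $(l, l_0, t)$. This rests on the two observations that the $O(l^4)$ term in the expansion of $A - B$ is a continuous function of $(l_0, |t|)$, so it is bounded uniformly once $l_0, |t| \leq L$, and that $|\tau'_t|$ is itself uniformly bounded over $\{l \leq \epsilon_0,\ l_0 \leq L,\ |t| \leq L\}$, which is precisely what legitimizes replacing the transcendental relation $\cosh^2(\tau'_t/2) = A/B$ by the linear bound $|\tau'_t| \leq Ml$.
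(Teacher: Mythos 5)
Your proposal is correct and follows essentially the same route as the paper: the length estimate is exactly the recorded chain of inequalities from Okai's formula, the bound $l' \geq |\log l|$ (which the paper obtains by citing the Collar Lemma, and which you instead derive directly and self-containedly from the $t=0$ length formula via $\cosh(l'/2) \geq \sinh^{-1}(l/2)$) yields the second inequality of (\ref{eq:col}), and the twist bound rests on the same expansion $A - B \leq Cl^2$ with the same uniformity argument in $(l_0,|t|)$ over the compact range determined by $L$. Your final step $|\tau'_t| \leq K\sqrt{\log\cosh(\tau'_t/2)}$ is in fact the correct form of the paper's stated relation $|\tau'_t| \leq K\log\cosh(\tau'_t/2)$, which cannot hold with a fixed $K$ as $\tau'_t \to 0$ (since $\log\cosh(x/2) \sim x^2/8$); your square-root version, together with your explicit verification that $|\tau'_t|$ stays bounded because $A/B \to 1$ uniformly, matches the quadratic behaviour $\log\cosh x = x^2/2 + O(x^4)$ that the paper itself invokes in the four-holed-sphere case, and so repairs a small slip in the paper's write-up.
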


Note that the second inequality in (\ref{eq:col}) follows from one version of the Collar Lemma, which says that  there exists $\epsilon >0$ such that for $l\leq \epsilon_0$, we have $l'\geq \vert \log l\vert$.

\section{The effect of an elementary move on the sphere with four holes}\label{elementary-sphere}

In this section, the surface $S=S_{0,4}$ is the sphere with four holes, where each hole can be either a boundary component or a cusp. We equip $S$ with two pair of pants decompositions $\{\alpha\}, \{\alpha'\}$, defined by two essential simple closed geodesics satisfying $i(\alpha,\alpha')=2$. 

Let $X$ be a complex structure on $S$ equipped with its intrinsic metric. Near each of the four holes, $X$ can have a geodesic boundary, or a cusp. We denote by $l_1, l_2, l_3, l_4$ the lengths of the boundary components (as before with the convention that $l_i$ is zero if the corresponding hole is a cusp); see Figure \ref{sphere1}. We denote by $l,\tau$ the length and twist parameter respectively of the curve $\alpha$, for the decomposition $\{\alpha\}$, and by $l',\tau'$ the length and twist parameter respectively of the curve $\alpha'$, for the decomposition $\{\alpha'\}$. We need a formula relating these values. Like in the case of the torus with one hole, Okai wrote such formulae in \cite{Okai} in the  case where all the holes are boundary components, and we need to see that the formulae also hold in the case where some of the holes are cusps. In the following proposition we deduce this by a continuity argument, as we did in the previous section for the case of the one-holed torus.

\begin{proposition}        \label{prop:Okaicusps2}
With the above notation we have
$$\cosh(l'/2) = \sinh^{-2}(l/2) \{   \cosh(l_1/2)\cosh(l_2/2) + \cosh(l_3/2)\cosh(l_4/2)  $$
$$+ \cosh(l/2) \left[ \cosh(l_1/2)\cosh(l_3/2) + \cosh(l_2/2)\cosh(l_4/2) \right] + \cosh(\tau) [ \cosh^2(l/2)  $$ 
$$ + 2\cosh(l_1/2) \cosh(l_4/2) \cosh(l/2) + \cosh^2(l_1/2) + \cosh^2(l_4/2) - 1 ]^{1/2} [ \cosh^2(l/2) $$ 
$$ + 2\cosh(l_2/2) \cosh(l_3/2) \cosh(l/2) + \cosh^2(l_2/2) + \cosh^2(l_3/2) - 1   ]^{1/2} \} $$
and
$$ \cosh(\tau') = \{  \cosh^2(l_1/2) + \cosh^2(l_2/2) + 2 \cosh(l_1/2)\cosh(l_2/2)\cosh(l'/2) $$ 
$$+ \sinh^2(l'/2)   \}^{-1/2}  \{    \cosh^2(l_3/2) + \cosh^2(l_4/2) + 2 \cosh(l_3/2)\cosh(l_4/2)\cosh(l'/2) $$ 
$$+ \sinh^2(l'/2)   \}^{-1/2}  \{  \sinh^2(l'/2)\cosh(l/2) - \cosh(l_1/2)\cosh(l_4/2) - \cosh(l_2/2) \cosh(l_3/2)$$
$$ - \cosh(l'/2) {[\cosh(l_1/2)\cosh(l_3/2) + \cosh(l_2/2)\cosh(l_4/2)]} \}  $$
\end{proposition}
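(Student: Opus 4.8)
The plan is to establish these trigonometric identities relating the two Fenchel-Nielsen coordinate systems on $S_{0,4}$ in exactly the same way as Proposition \ref{prop:Okaicusps} was proved for the one-holed torus: by invoking Okai's formulae \cite{Okai} for the case in which all four holes are boundary components (so that all $l_i > 0$), and then extending them to the degenerate cases where one or more holes become cusps (some $l_i = 0$) through a continuity argument. The key observation is that nothing in the argument is special to the torus; the only structural facts we used there were (i) that the relevant length and (absolute values of) twist parameters are continuous functions of shear coordinates associated to an ideal triangulation, and (ii) that the locus where the boundary lengths are strictly positive is dense in the space of shear coordinates. Both facts hold for $S_{0,4}$.

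The steps, in order, are as follows. First I would fix an ideal triangulation of $S_{0,4}$ and introduce the associated shear coordinates $(s_1, \ldots, s_n)$, recalling as in the previous section that these extend to surfaces with cusps \cite[Chapter 3]{Thurston} (see also \cite{PT}). Second, I would note that for each homotopy class of simple closed curve, the holonomy matrix of the corresponding element of $\pi_1(S_{0,4})$ has entries that are continuous functions of the shear coordinates; hence each geodesic length $l, l', l_1, l_2, l_3, l_4$ is a continuous function of the shear parameters (via the trace), and likewise the absolute values $|\tau|, |\tau'|$ are continuous functions of suitable simple closed geodesic lengths, hence continuous in the shear coordinates. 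Third, I would record that each boundary length $l_i$ is itself read off as a continuous (absolute-value-of-a-sum) function of the shear parameters, so that the cusp cases correspond precisely to the vanishing of these functions, which is a nowhere-dense coincidence locus; equivalently, the set on which all $l_i > 0$ is dense. Fourth, since both sides of each displayed identity are built by composition from the functions $l, l', l_1, \ldots, l_4, \tau, \tau'$ via $\cosh, \sinh, \sinh^{-1}, \sinh^{-2}$ and sums, products, square roots of manifestly positive quantities, and the arguments of the $\cosh$ and $\sinh$ enter only through $l/2, l_i/2, \tau, \tau'$ (so that the sign ambiguity in $\tau, \tau'$ is harmless because $\cosh$ is even), both sides are continuous functions on the whole space of shear coordinates. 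Okai's result gives equality on the dense open subset where all $l_i > 0$; by continuity the equality persists on the closure, which is the entire space, and in particular on the cusp loci. This yields the formulae for all $l_i \geq 0$.

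I would isolate one point requiring mild care: the right-hand sides contain factors $\sinh^{-1}(l/2)$ and $\sinh^{-2}(l/2)$, which blow up as $l \to 0$. However, on $S_{0,4}$ the curve $\alpha$ defining the pants decomposition cannot degenerate to length zero while staying in the interior of Teichm\"uller space (its length is a strictly positive continuous function on the locus we consider), and the same holds for $l'$; so these negative powers remain finite and continuous throughout the region where the identities are asserted, and no spurious singularity is introduced. It is also worth verifying that the even-ness of $\cosh$ legitimately removes the sign indeterminacy of the twist parameters, since the continuity argument only controls $|\tau|$ and $|\tau'|$, not their signs; as both identities involve $\tau$ and $\tau'$ only through $\cosh(\tau)$ and $\cosh(\tau')$, this is automatic.

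The main obstacle I anticipate is not conceptual but bookkeeping: one must make sure that the specific combination of square roots appearing in Okai's length formula and the product of inverse square roots in the twist formula are genuinely continuous across the cusp loci, i.e. that the expressions under each radical stay strictly positive (and the normalizing factors stay bounded away from zero) in a neighborhood of those loci, so that continuity is not lost at the boundary $l_i = 0$. Once one checks that every bracketed quantity under a square root is a positive continuous function of the shear coordinates and that the denominators in the $\cosh(\tau')$ formula never vanish, the density-and-continuity argument closes exactly as in Proposition \ref{prop:Okaicusps}, and the proof is complete.
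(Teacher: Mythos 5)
Your proposal is correct and follows essentially the same route as the paper's own proof: invoke Okai's formulae on the dense locus where all $l_i>0$, express $l, l', l_1,\dots,l_4, |\tau|, |\tau'|$ as continuous functions of the shear coordinates of an ideal triangulation (which extend to the cusp cases), and conclude by density and continuity, with the evenness of $\cosh$ absorbing the sign ambiguity of the twists. Your extra remarks on the non-vanishing of $\sinh(l/2)$ and of the radicands are sound precautions that the paper leaves implicit, but they do not change the argument.
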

\begin{proof}
When all the boundary lengths are positive ($l_1,l_2,l_3,l_4 > 0$) these formulae are proved in \cite{Okai}. To see that they also hold when some $l_i$ is zero, we proceed as in the proof of Proposition \ref{prop:Okaicusps}. We fix an ideal triangulation, now having $6$ edges, so we have $6$ shear coordinates. For $i=1\dots 4$ the length $l_i$ can be expressed as the absolute value of the sum of the shear coordinates relative to the edges adjacent to the hole involved (see \cite[Prop. 3.4.21]{Thurston}), and the cusp corresponds to the case where this sum is zero. With exactly the same argument as in Proposition \ref{prop:Okaicusps} we can see that the parameters $l, |\tau|, l', |\tau'|$ can be written as continuous functions of the shear coordinates. Hence both the left hand side and the right hand side of the equations are continuous functions of the shear coordinates. As these functions are equal when $l_1,l_2,l_3,l_4 > 0$, and as this subset is dense in the space of shear coordinates, the functions are equal everywhere, including on the set where the values $l_i$ are zero.      
\end{proof}

Like in the case of the torus with one hole, we choose $X$ such that $\alpha$ and $\alpha'$ intersect perpendicularly, in order to simplify the computations. 
Performing a Fenchel-Nielsen twist of
magnitude $t$ along $\alpha$, we obtain from $X$ a new complex structure $X^t$
and we denote its Fenchel-Nielsen coordinates
$(l,\tau_t)$ in the coordinate system associated to $\{\alpha\}$ and $(l'_t,\tau'_t)$ in the
coordinate system associated to $\{\alpha'\}$. As before we have $\tau_t = t$. Then we also have the following proposition.

  \begin{figure}[ht!]
\centering
\psfrag{a}{\small $l_1$}
\psfrag{b}{\small $\alpha'$}
\psfrag{l}{\small $\alpha$}
\psfrag{m}{\small $l_2$}
\psfrag{n}{\small $l_3$}
\psfrag{o}{\small $l_4$}
\includegraphics[width=.45\linewidth]{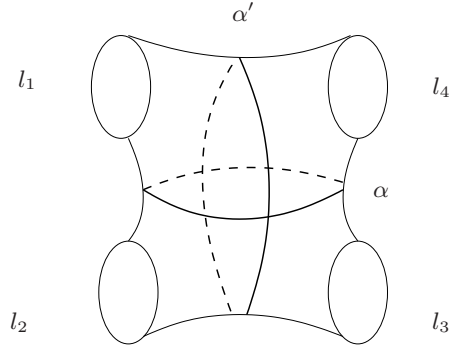}
\caption{\small {An elementary move on the sphere with four holes replaces one of the interior curves drawn by the other one.}}
\label{sphere1}
\end{figure}

\begin{proposition}\label{proposition:4-holds}
Let $X$ be a complex structure on $S_{0,4}$ with the following properties:
\begin{enumerate}
\item every hole of $X$ is a geodesic boundary component or a cusp, with lengths denoted by $l_1, l_2, l_3, l_4$ (with the convention that $l_i$ is zero if the corresponding hole is a cusp);
\item there exist a pair of perpendicular simple closed geodesics $\alpha, \alpha'$ on $X$ with $i(\alpha,\alpha')=2$.
\end{enumerate}
Let $X^t$ be the complex structure obtained from $X$ by performing a Fenchel-Nielsen twist of
magnitude $t$ along $\alpha$. Let $(l,\tau_t)$ and $(l'_t,\tau'_t)$ be the Fenchel-Nielsen coordinates of $X^t$ 
in the coordinate systems associated to $\{\alpha\}$  and $\{\alpha'\}$ respectively. (Note that $l'_0= l'$.)

Then, 
if $l_1,l_2,l_3,l_4$  and $|t|$ are bounded from above by some constant $L$, there exist constants $K, M, \epsilon_0>0$, all of them depending only on $L$, such that for $l \leq \epsilon_0$ we have
\begin{equation}\label{eq:log}
\log\frac{ l'_t}{l'}\leq \frac{K(1+e^{-l'})}{l'}\leq \frac{2K}{\vert \log l\vert}.
\end{equation}
and
\[|\tau'_t|\leq Ml.\]
\end{proposition}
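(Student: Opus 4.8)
The plan is to follow verbatim the strategy of Section \ref{elementary-torus}, replacing Okai's one-holed-torus formulae by the four-holed-sphere formulae of Proposition \ref{prop:Okaicusps2}, and to treat the length and twist parameters separately. Throughout, the hypothesis $l_1,l_2,l_3,l_4,|t|\le L$ is used only to guarantee that every constant produced below depends on $L$ alone.

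For the length estimate I write the length formula of Proposition \ref{prop:Okaicusps2} in the schematic form $\cosh(l'/2)=\sinh^{-2}(l/2)\,(P+\cosh(\tau)\,Q)$, where $P$ collects the first three summands and $Q$ is the product of the two square roots; both are positive. Since $X$ was chosen with $\alpha\perp\alpha'$, the base structure has $\tau=0$, so $\cosh(l'/2)=\sinh^{-2}(l/2)(P+Q)$, while after the time-$t$ twist $\tau_t=t$ gives $\cosh(l'_t/2)=\sinh^{-2}(l/2)(P+\cosh(t)Q)$. Dividing, the factor $\sinh^{-2}(l/2)$ cancels (the twist along $\alpha$ leaves $l$ and hence $P,Q$ unchanged) and
\[1\le \frac{\cosh(l'_t/2)}{\cosh(l'/2)}=\frac{P+\cosh(t)Q}{P+Q}=1+(\cosh(t)-1)\frac{Q}{P+Q}\le \cosh(t)\le\cosh(L),\]
because $0\le Q/(P+Q)\le 1$; in particular $l'_t\ge l'$. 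Exactly as in the torus case I then rewrite $\cosh(l'_t/2)/\cosh(l'/2)=(e^{(l'_t-l')/2}+e^{-(l'_t+l')/2})/(1+e^{-l'})$, so that $e^{(l'_t-l')/2}\le\cosh(L)(1+e^{-l'})$, and the inequality $x\le e^x$ applied to $x=(l'_t-l')/2\ge 0$ yields $l'_t-l'\le 2\cosh(L)(1+e^{-l'})$. Using $\log(1+s)\le s$,
\[\log\frac{l'_t}{l'}=\log\!\Big(1+\frac{l'_t-l'}{l'}\Big)\le \frac{2\cosh(L)(1+e^{-l'})}{l'},\]
which is the first inequality of (\ref{eq:log}) with $K=2\cosh(L)$. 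The second inequality follows from the Collar Lemma as in Proposition \ref{propositon:1-hold}: since $i(\alpha,\alpha')=2$, for $l\le\epsilon_0$ small the geodesic $\alpha'$ crosses the collar of $\alpha$, forcing $l'\ge|\log l|$, and together with $1+e^{-l'}\le 2$ this gives $K(1+e^{-l'})/l'\le 2K/|\log l|$.

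For the twist estimate the situation differs from the torus case in one essential point: the formula of Proposition \ref{prop:Okaicusps2} expresses $\cosh(\tau')$ not directly in terms of $l,t$ and the $l_i$, but through $l'$, which now tends to infinity as $l\to 0$. I write $\cosh(\tau'_t)=N/\sqrt{D_1D_2}$, where $D_1,D_2$ are the two braces appearing with exponent $-1/2$ and $N$ is the last brace, all evaluated at $l'=l'_t$. The key quantitative input is that, by the length formula, $\cosh(l'_t/2)\sim 4(P_0+\cosh(t)Q_0)\,l^{-2}$ as $l\to 0$, where $P_0,Q_0$ are the values of $P,Q$ at $l=0$; hence $e^{-l'_t/2}=O(l^2)$ with a constant controlled by $L$. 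Factoring the dominant term $\sinh^2(l'_t/2)$ out of $D_1,D_2$ and $N$, each of the first two equals $\sinh^2(l'_t/2)\,(1+O(e^{-l'_t/2}))=\sinh^2(l'_t/2)\,(1+O(l^2))$, while $N=\sinh^2(l'_t/2)\,\cosh(l/2)\,(1+O(l^2))$. Therefore
\[\cosh(\tau'_t)=\frac{N}{\sqrt{D_1D_2}}=\cosh(l/2)\,\big(1+O(l^2)\big)=1+O(l^2),\]
so that $\cosh(\tau'_t)-1\le C'l^2$ for $l\le\epsilon_0$ with $C'$ depending only on $L$. Finally, the universal inequality $\cosh x-1\ge x^2/2$ gives $\tau_t'^2/2\le C'l^2$, that is $|\tau'_t|\le\sqrt{2C'}\,l=:Ml$, which is the desired bound.

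I expect the main obstacle to be this twist estimate, and specifically the bookkeeping needed to make the expansions uniform in the data bounded by $L$. One must check that the error terms $O(l^2)$ hidden in the factorisation of $N,D_1,D_2$ — which come both from $\cosh(l/2)=1+O(l^2)$ and from the tail $e^{-l'_t/2}=O(l^2)$ — carry constants depending on $l_1,\dots,l_4,|t|$ only through their common upper bound $L$. This is where the hypotheses are genuinely used, and where, in contrast with the torus computation of Section \ref{elementary-torus}, the blow-up of the auxiliary length $l'_t$ must be handled by factoring out its dominant hyperbolic terms rather than by a plain Taylor expansion at $l=0$.
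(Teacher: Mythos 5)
Your proof is correct and follows essentially the same route as the paper's: the same split of the length formula of Proposition \ref{prop:Okaicusps2} into the two summands (the paper's $A$ and $B$, your $P$ and $Q$), the same ratio bound and $x\le e^x$ manipulation for the length estimate with the Collar Lemma supplying the second inequality, and the same normalization of the twist formula by $\sinh^2(l'_t/2)$ (the paper's $E_1,F_1$) using the lower bound $\cosh(l'_t/2)\ge c\,l^{-2}$ extracted from the length formula, with uniformity in $L$ tracked identically. Your only deviation is minor but slightly cleaner: you conclude from $\cosh(\tau'_t)-1\le C'l^2$ via the universal inequality $\cosh x-1\ge x^2/2$, whereas the paper passes through $\log\cosh(\tau'_t)\le 2(E_1-F_1)$ and then uses $(\tau'_t)^2\le K\log\cosh(\tau'_t)$, which requires first noting that $|\tau'_t|$ is small.
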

\begin{proof}
We note as in the case studied before that the second inequality in (\ref{eq:log}) follows from the Collar Lemma.

To prove the proposition, we use the formulae in Proposition \ref{prop:Okaicusps2}. Setting

$$A =  \cosh(l_1/2)\cosh(l_2/2) + \cosh(l_3/2)\cosh(l_4/2)$$ 
$$ + \cosh(l/2) \left[ \cosh(l_1/2)\cosh(l_3/2) + \cosh(l_2/2)\cosh(l_4/2) \right]$$

and

$$ B = [ \cosh^2(l/2)  + 2\cosh(l_1/2) \cosh(l_4/2) \cosh(l/2) + \cosh^2(l_1/2) + \cosh^2(l_4/2) - 1 ]^{1/2}$$
$$ [ \cosh^2(l/2)  + 2\cosh(l_2/2) \cosh(l_3/2) \cosh(l/2) + \cosh^2(l_2/2) + \cosh^2(l_3/2) - 1   ]^{1/2},$$
the formula for the length parameter $l'$ becomes

$$\cosh(l'_t/2)=\sinh^{-2}(l/2)\left(A+\cosh(t)B\right).$$

  For $t=0$, this formula becomes
  $$\cosh(l'/2)=\sinh^{-2}(l/2)\left(A+B\right).$$

Thus, we have 
$$\frac{\cosh(l'_t/2)}{\cosh(l'/2)}=\frac{\left(A+\cosh(t)B\right)}{\left(A+B\right)}\leq K,$$
where $K$ is a constant that depends only on $L$.
Using the same estimates as in Section \ref{elementary-torus}, we 
obtain 
 $$\log \frac{l_t'}{l'}\leq \frac{2K(1+e^{-l'})}{l'}.$$

Now we estimate the twist $\tau'_t$. We assume that the twist is positive, to avoid taking absolute values. The formula in Proposition \ref{prop:Okaicusps2} gives 
\begin{equation}\label{equ:tau2}
\cosh(\tau'_t)=\frac{E}{F},
\end{equation}
where

$$ E = \sinh^2(l_t'/2)\cosh(l/2) - \cosh(l_1/2)\cosh(l_4/2) - \cosh(l_2/2) \cosh(l_3/2)$$ 
$$ - \cosh(l_t'/2) {[\cosh(l_1/2)\cosh(l_3/2) + \cosh(l_2/2)\cosh(l_4/2)]}$$

$$ F = \{  \cosh^2(l_1/2) + \cosh^2(l_2/2) + 2 \cosh(l_1/2)\cosh(l_2/2)\cosh(l_t'/2) + \sinh^2(l_t'/2)   \}^{1/2} $$
$$ \{    \cosh^2(l_3/2) + \cosh^2(l_4/2) + 2 \cosh(l_3/2)\cosh(l_4/2)\cosh(l_t'/2) + \sinh^2(l_t'/2)   \}^{1/2} . $$
 
\vskip0.2cm

Note that $E>F$ for all $t\not= 0$. Let $E_1 = E / \sinh^2(l_t'/2)$, and $F_1 = F / \sinh^2(l_t'/2)$.

As $l\to 0$, we have $l'\to \infty$, and $E_1,F_1 \to 1$. As a result, we can assume that $l$ is sufficiently small, so that 
$E_1$ and $F_1$ are not less than $1/2$.
Then 
$$\log \cosh(\tau'_t)=\log \frac{E}{F}=\log \frac{E_1}{F_1}\leq 2(E_1-F_1).$$
(We used the fact that $\log x -\log y\leq 2(x-y)$ for $\frac{1}{2}\leq x\leq y$.)

Now we need to estimate $E_1 - F_1$. To do this, note that by the first formula of Proposition \ref{prop:Okaicusps}, we have
$$\cosh(l_t'/2) \geq 8\sinh^{-2}(l/2).$$
Using the fact that $\sinh^2(x) = \cosh^2(x) - 1$ we conclude that if $l$ is small, there exists a constant $C$ depending only on $\epsilon_0$ such that
$$\sinh(l_t'/2) \geq \frac{C}{l^2}.$$
First we estimate $E_1 - \cosh(l/2)$:
$$|E_1 - \cosh(l/2)| = | \frac{- \cosh(l_1/2)\cosh(l_4/2) - \cosh(l_2/2) \cosh(l_3/2)}{\sinh^2(l_t'/2)}$$ 
$$ - \frac{\cosh(l_t'/2) {[\cosh(l_1/2)\cosh(l_3/2) + \cosh(l_2/2)\cosh(l_4/2)]} }{\sinh^2(l_t'/2)} | \leq H l^2$$
where $H$ is a constant depending only on $L$ and $\epsilon_0$.

Then we estimate $F_1 - 1$:
$$|F_1 - 1| = |\{ 1 + \frac{ \cosh^2(l_1/2) + \cosh^2(l_2/2) + 2 \cosh(l_1/2)\cosh(l_2/2)\cosh(l_t'/2)}{\sinh^2(l_t'/2)}  \}^{1/2} $$ 
$$ \{ 1 + \frac{   \cosh^2(l_3/2) + \cosh^2(l_4/2) + 2 \cosh(l_3/2)\cosh(l_4/2)\cosh(l_t'/2)}{\sinh^2(l_t'/2) }  \}^{1/2} - 1 | $$
$$= |( 1 + R l^2 + O(l^4)) (1 + R l^2 + O(l^4)) -1| \leq 2 S l^2 $$
where $R$ and $S$ are constants depending only on $L$ and $\epsilon_0$. 

Then, finally:

\begin{eqnarray*}
|E_1 - F_1| & \leq&  |E_1 - \cosh(l/2)| + |F_1 - 1| + |\cosh(l/2) - 1|\\
&  \leq&  H l^2 + S L^2 + \frac{l^2}{2}+0(l^4) .
\end{eqnarray*}

Now if $|\tau'_t|$ is small, then there is a constant $K$ such that $(\tau'_t)^2 \leq K \log \cosh(\tau'_t)$.
(We use the formula $\log \cosh x= x^2/2+ O(x^4)$.)
Therefore,
$$|\tau'_t|\leq Ml.$$
This proves Proposition \ref{proposition:4-holds}.
\end{proof}

\section{Comparing Fenchel-Nielsen distances}\label{surfaces-finite}

We reformulate Proposition \ref{propositon:1-hold} and Proposition \ref{proposition:4-holds} in the following.

\begin{proposition}\label{prop:length-twist}
Let $X$ be a complex structure on $S_{1,1}$ or $S_{0,4}$ that satisfies the assumptions in 
Proposition \ref{propositon:1-hold} or Proposition \ref{proposition:4-holds} respectively.  
Assume that  $|t|$, 
$l_0$ (in the case where $S=S_{1,1}$) and $l_1,l_2,l_3,l_4$ (in the case where $S=S_{0,4}$) are bounded above by some constant $L$. Then
 there exist constants $K$, $M$ and $\epsilon_0>0$, all depending only on $L$ such that for $l\leq \epsilon_0$, we have 
$$\log\frac{ l'_t}{l'}\leq \frac {K}{|\log l|}$$
and
$$|\tau'_t|\leq Ml.$$
\end{proposition}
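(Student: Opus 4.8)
The plan is to observe that Proposition~\ref{prop:length-twist} is simply a restatement of the two cases already proved, so the only work is to unify their conclusions into a single pair of inequalities with constants depending only on $L$. I would prove the two cases in parallel, treating $S = S_{1,1}$ via Proposition~\ref{propositon:1-hold} and $S = S_{0,4}$ via Proposition~\ref{proposition:4-holds}, and then take the larger of the two constants obtained in each case so that a single choice of $K$, $M$, $\epsilon_0$ works uniformly.

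For the length inequality, I would start from the stronger bounds already available: in the one-holed torus case, inequality~(\ref{eq:col}) gives
\[
\log\frac{l'_t}{l'} \leq \frac{4\cosh(t/2)}{|\log l|},
\]
while in the four-holed sphere case, inequality~(\ref{eq:log}) gives
\[
\log\frac{l'_t}{l'} \leq \frac{2K}{|\log l|}.
\]
The only difference from the desired conclusion $\log(l'_t/l') \leq K/|\log l|$ is cosmetic: the numerator $\cosh(t/2)$ in the torus case is bounded by $\cosh(L/2)$ under the hypothesis $|t| \leq L$, so $4\cosh(t/2) \leq 4\cosh(L/2)$ is a constant depending only on $L$. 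Thus in both cases the right-hand side has the form (constant depending only on $L$)$/|\log l|$, and absorbing the numerical factor into a renamed constant $K$ yields the stated bound. For the twist inequality, both Proposition~\ref{propositon:1-hold} and Proposition~\ref{proposition:4-holds} already conclude exactly $|\tau'_t| \leq Ml$ with $M$ depending only on $L$, so nothing further is required there.

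Since the two source propositions already establish everything up to renaming constants and bounding $\cosh(t/2)$, I do not expect any genuine obstacle; the one point that needs a word of care is confirming that the threshold $\epsilon_0$ and the constants can be chosen simultaneously for both surface types. This is immediate: the statement concerns a fixed surface $S$ which is either $S_{1,1}$ or $S_{0,4}$, so for any given instance only one of the two propositions applies, and its constants transfer directly after the trivial bound on $\cosh(t/2)$. The proof is therefore essentially a pointer to the previous two sections together with the elementary remark that $\cosh(t/2) \leq \cosh(L/2)$ when $|t| \leq L$.
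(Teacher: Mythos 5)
Your proposal is correct and matches the paper exactly: the paper offers no separate proof, introducing the proposition with the words ``We reformulate Proposition \ref{propositon:1-hold} and Proposition \ref{proposition:4-holds} in the following,'' so the intended argument is precisely the constant-absorption you carry out. Your only added content --- bounding $\cosh(t/2)\leq\cosh(L/2)$ under $|t|\leq L$ and noting that a fixed surface invokes only one of the two source propositions --- is the correct and complete way to make that reformulation explicit.
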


 For $S=S_{1,1}$ or $S_{0,4}$, let $d_{FN_1}$ and $d_{FN_2}$ denote the Fenchel-Nielsen coordinates on $\mathcal{T}(S)$ associated to $\{\alpha\}$ and $\{\alpha'\}$ respectively. As above, for any complex structure $X$ on $S$, we denote by $X^t$ the complex structure obtained by the time-$t$ Fenchel-Nielsen twist of $X$ along $\alpha$.
We have the following:
\begin{proposition} \label{prop:not-Lipschitz1}
Suppose that $S$ is either the torus with one hole $S_{1,1}$ or the sphere with four holes  $S_{0,4}$, with $\alpha$ and $\alpha'$ being homotopy classes of closed curves in $S$ satisfying $i(\alpha,\alpha')=1$ in the case $S=S_{1,1}$ and $i(\alpha,\alpha')=2$ in the case $S=S_{0,4}$.
Then, there exist a sequence of points $X_n\in \mathcal{T}(S)$ such that
$$d_{FN_1}(X_n,X_n^t)=|t|, \ \mathrm{while} \ \lim_{i\to\infty}d_{FN_2}(X_n,X_n^t)=0.$$
\end{proposition}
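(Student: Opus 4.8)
The plan is to produce the sequence by letting the length of $\alpha$ tend to zero while keeping the surfaces in the perpendicular configuration and keeping both the boundary data and the twist magnitude $t$ fixed; the contrast between the two distances then comes directly from Proposition \ref{prop:length-twist}.

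First I would fix the boundary data---the value $l_0$ in the case $S=S_{1,1}$, or $l_1,\dots,l_4$ in the case $S=S_{0,4}$---together with a value of $t$, and for each $n$ let $X_n$ be the complex structure whose intrinsic metric realizes $\alpha$ and $\alpha'$ as perpendicular geodesics and for which the length of $\alpha$ equals $l_n$, where $(l_n)$ is any sequence with $l_n\to 0$. Such structures exist: in the Fenchel-Nielsen parametrization the length of $\alpha$ may be prescribed freely while the boundary data is held fixed, and the perpendicularity condition is precisely the vanishing of the twist parameter $\tau$ (the vanishing-twist configuration used in Sections \ref{elementary-torus} and \ref{elementary-sphere}), which forces the $\{\alpha'\}$-twist to vanish as well. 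Thus $X_n$ has $\{\alpha\}$-coordinates $(l_n,0)$ and $\{\alpha'\}$-coordinates $(l'_n,0)$.

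Next I would evaluate $d_{FN_1}(X_n,X_n^t)$. Since $X_n^t$ is the time-$t$ twist of $X_n$ along $\alpha$, its $\alpha$-length is unchanged and its $\alpha$-twist increases by $t$, while all boundary lengths are unaffected by a twist along $\alpha$. Hence the $\{\alpha\}$-coordinates of $X_n^t$ are $(l_n,t)$, only $\alpha$ contributes to the supremum in \eqref{def:FND}, and $d_{FN_1}(X_n,X_n^t)=|t|$ for every $n$. For $d_{FN_2}$ the boundary lengths are again unchanged, so only $\alpha'$ contributes and, using that the $\{\alpha'\}$-twist of $X_n$ is zero together with $l'_{n,t}\ge l'_n$, one gets $d_{FN_2}(X_n,X_n^t)=\max\bigl(\log(l'_{n,t}/l'_n),\,|\tau'_{n,t}|\bigr)$. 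Applying Proposition \ref{prop:length-twist} with $L$ an upper bound for $|t|$ and for the boundary data (legitimate since these are fixed), there are constants $K,M,\epsilon_0$ depending only on $L$ such that, for all $n$ with $l_n\le\epsilon_0$, one has $\log(l'_{n,t}/l'_n)\le K/|\log l_n|$ and $|\tau'_{n,t}|\le M l_n$. As $l_n\to 0$ both bounds tend to $0$, so $d_{FN_2}(X_n,X_n^t)\to 0$, which completes the argument.

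The main obstacle is purely organizational rather than analytic: one must verify that a Fenchel-Nielsen twist along $\alpha$ alters none of the coordinates except the $\alpha$-twist (in the $\{\alpha\}$-system) and the pair $(l'_{n,t},\tau'_{n,t})$ attached to $\alpha'$ (in the $\{\alpha'\}$-system), so that each supremum defining $d_{FN_1}$ and $d_{FN_2}$ collapses to a single curve. Once this is granted, all the genuine estimation has already been carried out in Proposition \ref{prop:length-twist}, and the desired dichotomy $d_{FN_1}(X_n,X_n^t)=|t|$ versus $d_{FN_2}(X_n,X_n^t)\to 0$ follows at once.
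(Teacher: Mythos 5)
Your proposal is correct and follows essentially the same route as the paper's own proof: choose $X_n$ with $\alpha$ and $\alpha'$ perpendicular, $l_{X_n}(\alpha)\to 0$, and boundary data bounded, then observe $d_{FN_1}(X_n,X_n^t)=|t|$ and apply Proposition \ref{prop:length-twist} to force $d_{FN_2}(X_n,X_n^t)\to 0$. The only (harmless) difference is that you fix the boundary lengths exactly, while the paper merely bounds them above by $L$, and you spell out the bookkeeping that only the coordinates attached to $\alpha$ (respectively $\alpha'$) change under the twist, which the paper leaves implicit.
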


\begin{proof} 
Let $X_n$, $n=1,2,\ldots$ be a sequence of complex structures on $S$ satisfying the following properties:
\begin{enumerate} 
\item for any $n=1,2,\ldots$ the geodesics in the classes of $\alpha$ and $\alpha'$ intersect perpendicularly;
\item the hyperbolic length $l_{X_n}(\alpha)=\epsilon_n$ tends to $0$ as $n\to\infty$;
\item the hyperbolic length of the holes of the torus with one hole $S_{1,1}$ or of the sphere with four holes $S_{0,4}$ is bounded by some fixed constant $L$.
\end{enumerate}
It is clear that $d_{FN_1}(X_n,X_n^t)=|t|$. By Proposition \ref{prop:length-twist}, we have 
$$0\leq \log\frac{ l'_t}{l'}\leq \frac {K}{|\log \epsilon_n|},$$
and
$$|\tau'_t|\leq M\epsilon_n.$$
As a result, 
$$d_{FN_2}(X_n,X_n^t)=\max\{|\log\frac{ l'_t}{l'}|,|\tau'_t|\}\to 0,$$
as $\epsilon_n\to 0$.
\end{proof}
We conclude with the following

\begin{corollary}\label{co:non-sphere} With the notation of Proposition \ref{prop:not-Lipschitz1}, the identity map between the metrics $d_{FN_1}$ and $d_{FN_2}$ is not Lipschitz. More precisely, there does not exist any constant $C$ satisfying $d_{FN_1}(x,y)\leq C d_{FN_2}(x,y)$ for all $x$ and $y$ in $\mathcal{T}(S)$.
\end{corollary}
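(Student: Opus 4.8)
The plan is to deduce the statement immediately from Proposition \ref{prop:not-Lipschitz1} by a contradiction argument; all the analytic work has already been carried out in the length and twist estimates of Sections \ref{elementary-torus} and \ref{elementary-sphere}, so no further computation is needed. The whole point is to repackage the asymmetric behaviour of the two coordinate systems, already exhibited as a sequence, into the non-existence of a single Lipschitz constant.

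First I would fix once and for all a real number $t \neq 0$. Proposition \ref{prop:not-Lipschitz1} then provides a sequence $(X_n)_{n \geq 1}$ in $\mathcal{T}(S)$ enjoying the two properties $d_{FN_1}(X_n, X_n^t) = |t|$ for every $n$, and $d_{FN_2}(X_n, X_n^t) \to 0$ as $n \to \infty$. The decisive feature is that the first quantity is a strictly positive constant independent of $n$, whereas the second tends to $0$; this contrast is the entire content we shall exploit.

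Next I would argue by contradiction. Suppose there existed a constant $C > 0$ with $d_{FN_1}(x,y) \leq C\, d_{FN_2}(x,y)$ for all $x, y \in \mathcal{T}(S)$. Applying this inequality to the pair $x = X_n$, $y = X_n^t$ gives $|t| = d_{FN_1}(X_n, X_n^t) \leq C\, d_{FN_2}(X_n, X_n^t)$ for every $n$. Letting $n \to \infty$, the right-hand side converges to $0$ while the left-hand side is the fixed positive number $|t|$, whence $|t| \leq 0$, contradicting $t \neq 0$. Therefore no such $C$ can exist, and in particular the identity map between the two Fenchel-Nielsen metrics fails to be Lipschitz (and a fortiori fails to be bi-Lipschitz).

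The one point requiring care is that the spaces in play are the finite-type Teichm\"uller spaces $\mathcal{T}(S_{1,1})$ and $\mathcal{T}(S_{0,4})$, on which both $d_{FN_1}$ and $d_{FN_2}$ are genuine finite-valued distances; this guarantees that every quantity above is finite and that the passage to the limit is legitimate. I do not anticipate any real obstacle in the corollary itself: the difficulty of the result is entirely concentrated in Proposition \ref{prop:not-Lipschitz1} and the estimates feeding into it, so the argument here amounts only to the formal extraction of the conclusion from that sequence.
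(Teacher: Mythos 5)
Your proof is correct and is exactly the argument the paper intends: the corollary is stated as an immediate consequence of Proposition \ref{prop:not-Lipschitz1}, and your contradiction argument (applying a putative Lipschitz constant $C$ to the pairs $(X_n, X_n^t)$ and letting $n\to\infty$) is the standard extraction the authors leave implicit. Your remark that both distances are finite on the finite-type spaces $\mathcal{T}(S_{1,1})$ and $\mathcal{T}(S_{0,4})$ is a sensible precaution and consistent with the paper.
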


 \section{General surfaces}    \label{section:general_case}
 
The aim of this section is to prove Theorem \ref{thm:main}, which is the analogue of Corollary \ref{co:non-sphere} for an arbitrary surface $S$ of finite or infinite type. 

The idea is to start with an arbitrary
pair of pants decomposition $\mathcal{P}=\{C_i\}$ of $S$, to take in it an embedded subsurface of type $S_{1,1}$ or $S_{0,4}$ with boundary curves belonging to the system $\{C_i\}$, and to modify the pair of pants decomposition $\mathcal{P}$ into a pair of pants decomposition $\mathcal{P}'$  by an elementary move $\alpha \to \alpha'$ performed inside this subsurface, according to the scheme used in Sections \ref{elementary-torus} and \ref{elementary-sphere}. 

There is one complication in doing this. Even though in the new pair of pants decomposition $\mathcal{P}'$ the length parameters of all the non-modified curves for the complex structure $X^t$ obtained by twisting along the curve $\alpha$ is the same in the systems $\mathcal{P}$ and $\mathcal{P}'$, the situation is not the same for the twist parameters. Performing the Fenchel-Nielsen twist along the curve $\alpha$ does not modify the twist parameters of the closed curves in the system $\mathcal{P}$ that are different from $\alpha$, but in the system  $\mathcal{P}'$, the twist parameters of the curves that are on the boundary of the subsurface $S_{1,1}$ or $S_{0,4}$ do not remain constant. This is the main question that we  deal with now.

Let $X$ be a complex structure on $S$ equipped with a 
geodesic pair of pants decomposition $\mathcal{P}$, and let $\alpha\in \mathcal{P}$ be a closed geodesic in the interior of $X$.

The closed geodesic $\alpha$ is either in the interior of a torus with one hole or of a sphere with four holes that is defined by the pair of pants decomposition  $\mathcal{P}$.
We denote such a one-holed torus or a four-holed sphere by $Y$.

Let $\mathcal{P}'$ be the pair of 
pants decomposition of $X$ obtained from $\mathcal{P}$ by an elementary move on $\alpha$, replacing this curve with a curve $\alpha'$ which is  contained in $Y$ and which has
minimal intersection number with $\alpha$.

  \begin{figure}[ht!]
\centering
\psfrag{1}{\small $C_4$}
\psfrag{2}{\small $C_3$}
\psfrag{3}{\small $\alpha$}
\psfrag{4}{\small $\alpha'$}
\psfrag{5}{\small $C_1$}
\psfrag{6}{\small $C_2$}
\psfrag{7}{\small $\beta$}
\includegraphics[width=.80\linewidth]{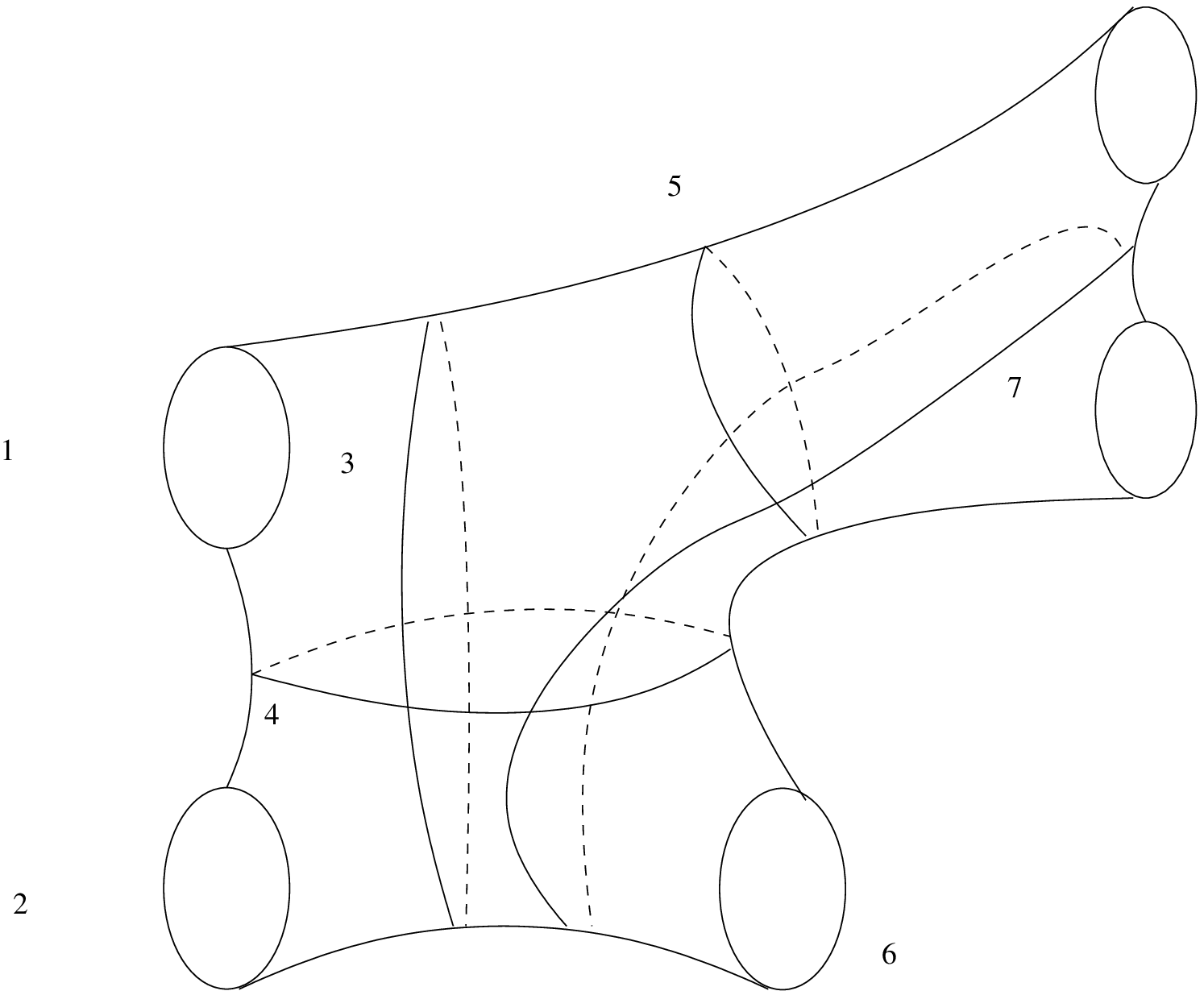}
\caption{\small{}}
\label{elementary}
\end{figure}

We take a sequence $X_i$ of complex structures on $S$ satisfying  $l_{X_{i}}(\alpha)\to 0$. By the collar lemma, we then have $l_{X_{i}}(\alpha')\to \infty$. 

We assume that the curve $\alpha$ is adjacent to two distinct pairs of pants in the decomposition $\mathcal{P}$. The other case can be dealt with in the same way.  We shall use the notation of Figure \ref{elementary}.

We shall apply Proposition \ref{proposition:4-holds}, and for this
we  assume that $X$ is a complex structure such that $l_X(\alpha)$ is sufficiently small.

Let $X^t$ be as before the time-$t$ twist of $X$ along $\alpha$. Then, $d_{FN}(X,X^t)=|t|$. 

To estimate
 the distance $d_{FN'}(X,X^t)$, first apply Proposition \ref{prop:length-twist}:
 $$\log\frac{ l_{X^t}(\alpha')}{l_{X}(\alpha')}\leq \frac {K}{|\log l_X(\alpha)|}, $$
 $$|\tau'_{X^t}(\alpha')-\tau'_{X}(\alpha')|\leq Ml_X(\alpha).$$
 For any $\beta\in \mathcal{P}',\beta\neq \alpha'$, the length is unchanged:
 $$\log\frac{ l_{X^t}(\beta)}{l_{X}(\beta)}=0.$$
 However, the equality
 $\tau'_{X^t}(\beta)= \tau'_{X}(\beta)$ is not necessarily true. Indeed, let $C_1,\ldots,C_4$ be the boundary components of the sphere with four holes that is the union of the two pairs of pants of $\mathcal{P}$ that contain $\alpha$ (notation of Figure \ref{elementary}).
 In the Fenchel-Nielsen coordinates of $\mathcal{P}'$, the length of $\alpha'$ is changed under the twist along $\alpha$, 
 and for each $j=1,\ldots,4$, $\tau'_{X^t}(C_j)$ may be not be equal to $\tau'_{X}(C_j)$.

However, we have the following:
\begin{lemma}\label{lemma:othertwist} For $i=1,\ldots,4$, 
 $|\tau'_{X^t}(C_i)-\tau'_{X}(C_i)|\to 0$, as $l_X(\alpha)\to 0$.
 \end{lemma}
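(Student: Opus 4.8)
The plan is to show that the twist parameters of the four boundary curves $C_1,\dots,C_4$, measured in the decomposition $\mathcal{P}'$, change by an amount that vanishes as $l_X(\alpha)\to 0$. The essential geometric point is that the twist parameter $\tau'(C_i)$ records a relative rotation across $C_i$ between the two pairs of pants abutting $C_i$ in $\mathcal{P}'$. When we pass from $\mathcal{P}$ to $\mathcal{P}'$ by the elementary move $\alpha\to\alpha'$, the curve $\alpha'$ becomes a boundary of the pairs of pants adjacent to each $C_i$, so the reference geodesic arcs (the perpendicular seams) used to define $\tau'(C_i)$ now have one endpoint on $\alpha'$. Thus $\tau'_{X}(C_i)$ and $\tau'_{X^t}(C_i)$ differ precisely because the seams from $C_i$ to $\alpha'$ move when we twist along $\alpha$, and I must control that motion.

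\smallskip

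\noindent\textbf{Key steps.} First I would localize the problem to the four-holed sphere $Y$ (or one-holed torus) containing $\alpha$, since the twist along $\alpha$ is supported there and only the curves $C_1,\dots,C_4$ on $\partial Y$ are affected; all other curves of $\mathcal{P}'$ keep both their length and their twist unchanged because the move is confined to $Y$. Second, I would express each $\tau'_{X^t}(C_i)$ geometrically as a signed length measured along $C_i$ between two feet of perpendiculars: the foot of the common perpendicular from $C_i$ into the pair of pants on the outside (which does not involve $\alpha$ at all and is therefore fixed under the twist), and the foot of the common perpendicular from $C_i$ to $\alpha'$ inside $Y$. Hence the difference $\tau'_{X^t}(C_i)-\tau'_{X}(C_i)$ equals the displacement along $C_i$ of the foot of the perpendicular to $\alpha'$, and I must bound this displacement. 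Third, I would invoke the collar lemma: as $l_X(\alpha)\to 0$ we have $l(\alpha')\to\infty$, and $\alpha'$ crosses a long thin collar around $\alpha$. In the limit, $Y$ degenerates so that $\alpha'$ effectively splits into two arcs exiting through the cusp that $\alpha$ becomes; the configuration of perpendicular feet on each $C_i$ converges to a fixed limiting configuration that depends only on the boundary lengths $l_1,\dots,l_4$, which are held fixed under the twist along $\alpha$. The twisting by $t$ only rotates across the pinching curve $\alpha$, and this rotation becomes invisible from the vantage point of $C_i$ as the collar becomes infinitely long and thin.

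\smallskip

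\noindent\textbf{Making the estimate quantitative.} To turn this into a proof I would set up explicit hyperbolic-trigonometric expressions for the feet of the perpendiculars, using the same right-angled hexagon and pentagon decompositions of the pairs of pants that underlie Okai's formulae. Each $\tau'(C_i)$ is then a smooth function of $l$, $t$, and the fixed lengths $l_1,\dots,l_4$, and I would show that as $l\to 0$ this function converges, uniformly for $|t|\le L$, to a limit independent of $t$. Concretely, the dependence on $t$ enters only through quantities of the form $\cosh(t/2)\,\sinh^{-1}(l/2)$ or through $l'_t$, and the relevant geometric angle or offset on $C_i$ is damped by a factor that decays like $e^{-l'_t/2}$, i.e. like a power of $l$ by the collar lemma $l'\ge|\log l|$. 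So the $t$-dependent part of $\tau'(C_i)$ is $O(l^{c})$ for some $c>0$, uniformly in $t$, giving the desired convergence to zero of the difference.

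\smallskip

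\noindent\textbf{Main obstacle.} The hard part will be the geometric bookkeeping in the middle step: writing $\tau'(C_i)$ as an honest signed distance and separating it cleanly into a twist-independent ``outside'' contribution and an ``inside'' contribution that depends on $t$, while keeping careful track of orientations and of the base seams used to normalize the twist. Once $\tau'(C_i)$ is written as a single explicit function of $(l,t,l_1,\dots,l_4)$, the limiting and uniformity argument is routine; the difficulty is purely in correctly identifying which perpendicular feet define the twist in $\mathcal{P}'$ and verifying that the twist along $\alpha$ affects only the collar and not the outer pentagons, so that the $t$-dependence sits entirely in a collar-damped term.
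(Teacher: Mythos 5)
Your overall strategy---reducing the lemma to controlling the displacement along $C_i$ of the feet of the perpendiculars dropped from $C_i$ to $\alpha'$, and then arguing that this displacement is collar-damped as $l_X(\alpha)\to 0$---is geometrically sound and genuinely different from what the paper does, but as written it has a real gap: the quantitative heart of the argument is asserted rather than proved. You claim that the $t$-dependence of $\tau'(C_i)$ ``enters only through quantities of the form $\cosh(t/2)\sinh^{-1}(l/2)$ or through $l'_t$'' and is damped by a factor like $e^{-l'_t/2}$, appealing to the hexagon decompositions ``that underlie Okai's formulae''. But Okai's formulae in \cite{Okai} compute only the pair $(l'_t,\tau'_t)$ attached to the new curve $\alpha'$; they say nothing about the twist parameters of the boundary curves $C_1,\dots,C_4$, and no closed formula for $\tau'(C_i)$ as a function of $(l,t,l_1,\dots,l_4)$ is available either there or in this paper. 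Producing one is precisely the ``geometric bookkeeping'' you defer: you would have to (a) prove that the marking-dependent quantity $\tau'_{X^t}(C_i)-\tau'_{X}(C_i)$ really equals the displacement of the inside perpendicular feet measured against the fixed outside seams (this rests on the fact that the structure and marking on $S\setminus\alpha$, hence the outer pair of pants, are unchanged by the twist---true, but it must be tracked through the normalization conventions for the twist), and (b) establish the uniform $O(l^{c})$ decay in $|t|\leq L$, noting that the inside pair of pants with boundary $(\alpha',C_i,C_j)$ itself degenerates as $l\to 0$ because $l'_t\to\infty$, so uniformity is not automatic. Without (a) and (b) carried out, the proposal essentially restates the lemma rather than proving it.

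For comparison, the paper's proof avoids all trigonometry by a soft compensation argument. It chooses an auxiliary simple closed curve $\beta$ with $i(C_1,\beta)=2$ that is disjoint from every other curve of $\mathcal{P}$---in particular from $\alpha$, so that $l(\beta)$ is unchanged under the twist along $\alpha$. After replacing $\beta$ by its image under a high power of the Dehn twist along $C_1$, so that the intersection angles satisfy $\cos\theta\geq 1/2$, Wolpert's derivative formula together with Kerckhoff's monotonicity of the angles gives $l_s(\beta)-l_0(\beta)\geq s$ for the twist flow along $C_1$. Since, in the $\mathcal{P}'$-coordinates, the only changing parameters that affect $l(\beta)$ are the twist $t'=\tau'_{X^t}(\alpha')-\tau'_{X}(\alpha')$ along $\alpha'$ and the twist along $C_1$, the invariance of $l(\beta)$ forces $|\tau'_{X^t}(C_1)-\tau'_{X}(C_1)|\leq 2|t'|$, which tends to $0$ by Proposition \ref{prop:length-twist}. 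This reduces the boundary twists to the already-controlled quantity $t'$ in a few lines. If you want to salvage your computational route, the cleanest repair is to borrow this mechanism: detect the twist change at $C_i$ through the length change of a transverse curve rather than through explicit feet-of-perpendicular coordinates, since that is what makes the marking bookkeeping tractable.
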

\begin{proof}
For the proof, we take $i=1$.

Choose a simple closed curve $\beta$ as in Figure \ref{elementary}, contained in the 4-holed sphere that is the union of the pair of pants with boundary curves $\alpha',C_1,C_2$ and the other pair of pants adjacent to $C_1$,
such that $\beta$ satisfies $i(C_1, \beta)=2$ and is disjoint from any other
curve in $\mathcal{P}\setminus \{C_1\}$, as in Figure \ref{elementary}.

We may assume that the angles at the  intersection points of $C_1$ with $\beta$ satisfy
$\cos \theta\geq \frac{1}{2}$. To achieve this we can choose a sufficiently large integer $N$, and replace $\beta$ with its image under an $N$-order positive Dehn twist
along $C_1$.

We now apply the first variational formula of Wolpert \cite{Wolpert83}.

For $s>0$, we denote by $l_s(\beta)$ the hyperbolic length of $\beta$
under the twist of amount $s$ along $C_1$.  Denoting the intersection angles of
$C_1$ with $\beta$ by $\theta_{1,s}$ and $\theta_{2,s}$, we have
\[
\frac{dl_s(\beta)}{ds}=\cos \theta_{1,s}+ \cos \theta_{2,s}.
\]
By the mean value theorem, for each $s>0$, there exists a $\xi\in [0,s]$, such that
\begin{equation}\label{equation:wolpert}
l_s(\beta)-l_0(\beta)=( \cos \theta_{1,\xi}+ \cos \theta_{2,\xi}) s.
\end{equation}
Since $\cos \theta_{j,s},j=1,2$ are strictly increasing functions of $s$ (Proposition 3.5 of Kerckhoff \cite{Kerckhoff}),
we have
\begin{equation}\label{equation:cos}
\cos \theta_{i,\xi}\geq \cos\theta_{i,0}\geq \frac{1}{2},
\end{equation}
by assumption.  Combining (\ref{equation:wolpert}) and (\ref{equation:cos}), we have
\begin{equation}\label{equation:inequ}
l_s(\beta)-l_0(\beta)\geq s.
\end{equation}

We show that $\vert\tau'_{X^t}(C_1)-\tau'_{X}(C_1)\vert\to 0$ as $l_X(\alpha)\to 0$. Note that $l_X(C_1)=l_{X^t}(C_1)$ and $\beta$ also intersects $\alpha'$.

Suppose first that the length of
$\beta$ is decreased by an amount of $x_t>0$ under the
 twist of distance $t'=\tau'_{X^t}(\alpha')-\tau'_{X}(\alpha')$ along $\alpha'$. Then this length should be
increased by the same amount $x_t$ under the action of the twist along $C_1$.
Note that $x_t\leq 2|t'|$.

Applying the inequality (\ref{equation:inequ}), we have 
\begin{equation}\label{eq:in}
|\tau'_{X^t}(C_1)-\tau'_{X}(C_1)|\leq 2|t'|=2|\tau'_{X^t}(\alpha')-\tau'_{X}(\alpha')|.
\end{equation}
By Proposition \ref{prop:length-twist}, $t' \to 0$
as $l_X(\alpha)\to 0$. As a result, $|\tau'_{X^t}(C_1)-\tau'_{X}(C_1)| \to 0$ as $l_X(\alpha)\to 0$.

The case where the length of $\beta$ is increased under the twist along $\alpha'$ can be dealt with by the same argument.
\end{proof}

 Now we are ready to prove the various statements of Theorem \ref{thm:main}. 
 
 Let us summarize the setting. $S$ is an orientable surface which is either of infinite topological type 
 or of finite topological type with negative Euler characteristic and which is not homeomorphic to a pair of pants. Let $\mathcal{P}$ be a pair of pants decomposition of $S$. Let $\alpha \in \mathcal{P}$ be a curve which is not a boundary component of $S$, and consider the pair of pants decomposition $\mathcal{P}'$ obtained from $\mathcal{P}$ by an elementary move about $\alpha$. We denote by $\alpha'$ the curve replacing $\alpha$ in $\mathcal{P}'$. Let $Y$ be the one-holed torus or four-holed sphere containing $\alpha$ in its interior. If $X$ is a complex structure on $S$, and $t$  a real number, we denote by $X^t$ the structure obtained by a time-$t$ Fenchel-Nielsen twist of $X$ along $\alpha$.

\begin{theorem}         
For every base complex structure $X_0$ on $S$ we have $\mathcal{T}_{FN,\mathcal{P}}(X_0) = \mathcal{T}_{FN,\mathcal{P'}}(X_0)$ as sets, but the identity map between these two spaces is not Lipschitz with respect to the  metrics $d_{FN,\mathcal{P}}$ and $d_{FN,\mathcal{P'}}$ respectively. More precisely, there exists a sequence of points $X_n \in \mathcal{T}_{FN,\mathcal{P}}(X_0)$ such that 
$$d_{FN,\mathcal{P}}(X_n, X_n^t) = |t|, \mbox{ while } \lim_{n \to \infty} d_{FN,\mathcal{P}'}(X_n, X_n^t) = 0.$$
\end{theorem}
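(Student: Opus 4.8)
The plan is to assemble the three ingredients already in place: the set-level identification coming from a single elementary move, the local length and twist control inside $Y$ furnished by Proposition \ref{prop:length-twist}, and the control of the boundary twists of $Y$ furnished by Lemma \ref{lemma:othertwist}. The set-theoretic equality $\mathcal{T}_{FN,\mathcal{P}}(X_0)=\mathcal{T}_{FN,\mathcal{P}'}(X_0)$ needs no new work: since $\mathcal{P}'$ is obtained from $\mathcal{P}$ by a single elementary move, it is exactly the statement recalled in Section \ref{section:preliminaries}. The real content is therefore the failure of the Lipschitz property, which I would establish by exhibiting the explicit sequence demanded by the ``more precisely'' clause.

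First I would build the sequence $X_n$. Starting from $X_0$, I keep the Fenchel-Nielsen coordinates (relative to $\mathcal{P}$) of all curves outside $Y$ equal to those of $X_0$, and modify only the data supported in $Y$: I arrange that the geodesics $\alpha$ and $\alpha'$ meet perpendicularly, so that Proposition \ref{prop:length-twist} applies, let $l_{X_n}(\alpha)=\epsilon_n\to 0$, and keep the boundary lengths of $Y$ (the single hole in the $S_{1,1}$ case, or $l_1,\dots,l_4$ in the $S_{0,4}$ case) bounded by a fixed constant $L$, which is possible since these equal the corresponding coordinates of $X_0$. For each fixed $n$ only finitely many coordinates differ from those of $X_0$, by finite amounts, so $d_{FN,\mathcal{P}}(X_0,X_n)<\infty$ and $X_n\in\mathcal{T}_{FN,\mathcal{P}}(X_0)$; since $d_{FN,\mathcal{P}}(X_n,X_n^t)=|t|$ the triangle inequality places each $X_n^t$ in the same space. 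The equality $d_{FN,\mathcal{P}}(X_n,X_n^t)=|t|$ is then immediate from the definition (\ref{def:FND}): a time-$t$ twist along $\alpha$ changes only the twist coordinate of $\alpha$, by exactly $t$, and leaves every length coordinate and every other twist coordinate of the system $\mathcal{P}$ unchanged, so the supremum reduces to that single term.

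For the other distance I would compute the supremum in (\ref{def:FND}) relative to $\mathcal{P}'$ by sorting the curves into three classes. Every curve of $\mathcal{P}'$ other than $\alpha'$ and the boundary curves $C_1,\dots,C_4$ of $Y$ keeps both its length and its twist unchanged, because the twist along $\alpha$ is supported in $Y$ and the elementary move leaves the adjacent pairs of pants of such a curve untouched; these contribute $0$. For $\alpha'$, Proposition \ref{prop:length-twist} gives $\log(l'_t/l')\le K/|\log\epsilon_n|\to 0$ and $|\tau'_{X_n^t}(\alpha')-\tau'_{X_n}(\alpha')|\le M\epsilon_n\to 0$. For each $C_i$ the length is unchanged, as $C_i$ is disjoint from $\alpha$, while the twist difference tends to $0$ by Lemma \ref{lemma:othertwist} (the $S_{1,1}$ case, where $Y$ has a single boundary curve, being entirely analogous). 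Hence the supremum is a maximum over finitely many terms, each tending to $0$, so $d_{FN,\mathcal{P}'}(X_n,X_n^t)\to 0$; combined with $d_{FN,\mathcal{P}}(X_n,X_n^t)=|t|$ this proves the identity map is not Lipschitz.

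The step I expect to be the crux is the bookkeeping of \emph{which} $\mathcal{P}'$-coordinates actually move, and in particular the control of the boundary twists $\tau'(C_i)$. A priori the change of pants decomposition could couple the twist along $\alpha$ to distant coordinates, which would jeopardise the vanishing of the supremum; the essential point is that both the elementary move and the twist are confined to $Y$ and its boundary, so only $\alpha'$ and $C_1,\dots,C_4$ are affected. Among these, the genuinely delicate estimate is that $\tau'(C_i)$ varies by an amount going to $0$, which is precisely Lemma \ref{lemma:othertwist}, resting on the Wolpert first variation formula and Kerckhoff's monotonicity; the remaining estimates are the local one- and four-holed bounds already packaged in Proposition \ref{prop:length-twist}.
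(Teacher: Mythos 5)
Your proposal is correct and takes essentially the same route as the paper's own proof: the identical sequence $X_n$ (perpendicular $\alpha,\alpha'$, $l_{X_n}(\alpha)=\epsilon_n\to 0$, boundary lengths of the relevant pants bounded by $L$), with Proposition \ref{prop:length-twist} controlling the $\alpha'$-coordinates and Lemma \ref{lemma:othertwist} the boundary twists of $Y$. Your additional bookkeeping --- verifying $X_n\in\mathcal{T}_{FN,\mathcal{P}}(X_0)$ via finiteness of the changed coordinates, and the explicit three-way sorting of the $\mathcal{P}'$-curves --- merely spells out steps the paper leaves implicit.
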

\begin{proof}
Let $X_n$, $n=1,2,\ldots$ be a sequence of complex structures on $S$ satisfying the following properties:
\begin{enumerate} 
\item for any $n=1,2,\ldots$ the geodesics in the classes of $\alpha$ and $\alpha'$ intersect perpendicularly;
\item the hyperbolic length $l_{X_n}(\alpha)=\epsilon_n$ tends to $0$ as $n\to\infty$;
\item the hyperbolic length of the boundary curves of the pairs of pants containing $\alpha$ and $\alpha'$ is bounded by some fixed constant $L$.
\end{enumerate}
It is clear that $d_{FN, \mathcal{P}}(X_n,X_n^t)=|t|$. By Proposition \ref{prop:length-twist}, there exists a constant $K$ such that for all $n$
$$|\log \frac{l_{X_n^t}(\alpha')}{l_{X_n}(\alpha')}| \leq \frac {K}{|\log \epsilon_n|}$$
and
$$|\tau'_{X_n^t}(\alpha')-\tau'_{X_n}(\alpha')|\leq K\epsilon_n.$$
It follows from Lemma \ref{lemma:othertwist}  that for each $C_j\in \mathcal{P}'\setminus \{\alpha\}$,
$$\lim_{n\to\infty}\sup_{C_j\in \mathcal{P}'\setminus \{\alpha\}}|\tau'_{X_n^t}(C_j)-\tau'_{X_n}(C_j)|=0.$$
As a result,
$$\lim_{n\to\infty}d_{FN, \mathcal{P}'}(X_n,X_n^t)= 0.$$
\end{proof}

\begin{theorem}
For every base complex structure $X_0$ on $S$, consider the space $T = \mathcal{T}_{qc}(X_0) \cap \mathcal{T}_{FN,\mathcal{P}'}(X_0)$. Then the identity map from $(T,d_{FN,\mathcal{P}'})$ to $(T, d_{qc})$ is not Lipschitz. More precisely, there exists a sequence of points $X_n \in T$ and a constant $C > 0$ such that 
$$d_{qc}(X_n, X_n^t) \geq C|t|, \mbox{ while } \lim_{n \to \infty} d_{FN,\mathcal{P}'}(X_n, X_n^t) = 0.$$
\end{theorem}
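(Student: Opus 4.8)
The plan is to reduce this quasi-conformal statement to the Fenchel-Nielsen comparison we have just established, by exploiting the known relationship between the two metrics when boundary lengths are controlled. The key observation is that the sequence $X_n$ constructed in the previous theorem, together with the twisted structures $X_n^t$, already gives us what we need in the $d_{FN,\mathcal{P}'}$ direction: by that theorem we have $\lim_{n\to\infty} d_{FN,\mathcal{P}'}(X_n, X_n^t) = 0$. So the entire content of the present statement lies in producing the \emph{lower} bound $d_{qc}(X_n, X_n^t) \geq C|t|$ for some constant $C>0$ independent of $n$. My strategy is therefore to estimate the quasi-conformal distance from below directly, using the fact that a time-$t$ Fenchel-Nielsen twist along a short geodesic $\alpha$ produces a definite amount of quasi-conformal distortion.

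First I would recall the standard lower bound for the Teichm\"uller metric in terms of length ratios of simple closed curves: for any simple closed curve $\gamma$ on $S$,
\begin{equation*}
d_{qc}(X, X') \geq \frac{1}{2}\left| \log \frac{l_X(\gamma)}{l_{X'}(\gamma)} \right|,
\end{equation*}
which follows from Wolpert's inequality (or Kerckhoff's formula for the Teichm\"uller metric in terms of extremal lengths). The natural curve to test against is $\alpha'$, the image of $\alpha$ under the elementary move, since the twist along $\alpha$ changes the length of $\alpha'$ substantially when $\alpha$ is short. The difficulty is that by the estimates in Proposition \ref{prop:length-twist} the ratio $l_{X_n^t}(\alpha')/l_{X_n}(\alpha')$ actually tends to $1$ as $\epsilon_n \to 0$, so $\alpha'$ gives a lower bound that degenerates. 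Hence I would instead test against the short curve $\alpha$ itself: under a time-$t$ twist along $\alpha$ its own length is unchanged, so that curve is useless too, and this forces me to look at a curve crossing $\alpha$ whose length genuinely grows with the twist.

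The right approach is to use a curve $\delta$ that crosses $\alpha$ transversally, so that twisting by $t$ along a short $\alpha$ drags $\delta$ through a collar and increases its length by an amount proportional to $|t|$. Concretely, for a geodesic $\alpha$ of length $\epsilon_n$ and a curve $\delta$ with $i(\alpha,\delta)=k$, the Collar Lemma forces $\delta$ to traverse the collar of width $\sim 2\log(1/\epsilon_n)$, and the twist contributes a length increment growing linearly in $t$ with a coefficient controlled by the collar geometry. The cleaner route, which I expect to be the main technical step, is to bound $d_{qc}$ below by the Fenchel-Nielsen distance \emph{in the $\mathcal{P}$ coordinates}: our earlier comparison results from \cite{ALPSS} show that when the boundary curves of the pants decomposition have uniformly bounded length (which holds here by hypothesis (3) on the $X_n$), the identity map from $(T, d_{qc})$ to $(T, d_{FN,\mathcal{P}})$ is locally bi-Lipschitz with constants depending only on the length bound $L$. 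Since $d_{FN,\mathcal{P}}(X_n, X_n^t) = |t|$ exactly, this yields $d_{qc}(X_n, X_n^t) \geq C|t|$ with $C$ depending only on $L$, and combined with the already-established fact $d_{FN,\mathcal{P}'}(X_n, X_n^t) \to 0$ this completes the proof. The main obstacle is verifying that the local bi-Lipschitz comparison applies uniformly along the whole sequence $X_n$ rather than just near a single point; this is exactly where hypothesis (3) on the uniform bound $L$ for the boundary lengths is used, guaranteeing that all the $X_n$ lie in a region where the comparison constant is uniform.
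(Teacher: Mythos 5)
Your overall route is the same as the paper's: abandon curve-by-curve length-ratio lower bounds (your observations that $\alpha'$ and $\alpha$ are useless test curves are correct) and instead bound $d_{qc}(X_n,X_n^t)$ from below by $C\,d_{FN,\mathcal{P}}(X_n,X_n^t)=C|t|$ via the quasi-conformal versus Fenchel--Nielsen comparison from \cite{ALPSS}, importing $d_{FN,\mathcal{P}'}(X_n,X_n^t)\to 0$ from the preceding theorem. But there is a genuine gap at the one step carrying all the weight: you justify applying that comparison by asserting that the boundary curves of the pants decomposition have uniformly bounded length, ``which holds here by hypothesis (3).'' It does not. Hypothesis (3) bounds only the boundary curves of the pairs of pants containing $\alpha$ and $\alpha'$; it says nothing about the other curves of $\mathcal{P}$. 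The result you need (Theorem 7.6 of \cite{ALPSS}) is stated for \emph{upper-bounded} surfaces, i.e.\ under a uniform bound $l_X(C)\leq M$ for \emph{every} $C\in\mathcal{P}$, and this hypothesis genuinely fails here: the theorem quantifies over every base structure $X_0$, and since the $X_n$ must lie in $T$, hence at finite $d_{qc}$- and $d_{FN,\mathcal{P}}$-distance from $X_0$ (both distances control logarithms of length ratios), a base $X_0$ with unbounded pants-curve lengths forces every $X_n$ to be non-upper-bounded as well. So for infinite-type $S$ the hypothesis you invoke cannot be arranged, and the citation as you state it does not apply.

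The paper closes exactly this hole with an observation you are missing: the deformation $X_n\mapsto X_n^t$ is a twist along the \emph{single} curve $\alpha$ (whereas Theorem 7.6 of \cite{ALPSS} treats multi-twists along many curves simultaneously), and an inspection of its proof shows that only the lengths of the curves of the pairs of pants containing $\alpha$ enter the estimate --- and those are bounded by $L$ by hypothesis (3). Moreover, to extract a constant depending only on $L$ and $|t|$ one also needs an a priori \emph{upper} bound on $d_{qc}(X_n,X_n^t)$, uniform in $n$; the paper gets this from Lemma 8.3 of \cite{ALPSS}, again localized to the single twisting curve. Your closing sentence gestures at the uniformity issue but attributes its resolution to hypothesis (3) alone; without the single-curve localization of both Theorem 7.6 and Lemma 8.3, the argument as written fails for infinite-type surfaces (indeed for any base point that is not upper-bounded), which is precisely the case the theorem is designed to cover.
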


\begin{proof}
We use the same sequence as in the previous theorem. We claim that there exists a constant $C$, depending only on the constants $L$ and $|t|$, and such that $d_{qc}(X_n, X_n^t) \geq C d_{FN,\mathcal{P}}(X_n, X_n^t) = C|t|$. This essentially follows from Theorem 7.6 in \cite{ALPSS}, but some special care is needed because, in the form that theorem is stated, one of the hypotheses is not satisfied: that the surfaces $X_n$ are upper bounded, i.e. that there exists a constant $M$ such that for every curve $C \in \mathcal{P}$ we have $l_{X_0}(C) \leq M$. This is not  satisfied in general. The point is that here we are performing the twist on surfaces $X_n$ along the single curve $\alpha$ (while in Theorem 7.6 in \cite{ALPSS} we allowed a multi-twist around many curves), so we only need to check that the lengths of the curves of the pairs of pants containing $\alpha$ are bounded by a constant. This is true in this case, because we are assuming that the lengths are bounded above by $L$. To apply the theorem we also need to check that the lengths $d_{qc}(X_n, X_n^t)$ are bounded above by a constant that depends only on $L$ and $|t|$. This is given by Lemma 8.3 in \cite{ALPSS}, again by paying attention to the fact that, even if the surfaces are not upper bounded, we are twisting only around $\alpha$, and the lengths of the boundary curves of the pairs of pants containing $\alpha$ are bounded by $L$.  
\end{proof}

Now assume that $S$ is an orientable surface of infinite topological type and let $\mathcal{P}$ be a pair of pants decomposition of $S$. We choose a sequence of curves $\alpha_i \in \mathcal{P}$ such that the one-holed tori or four-holed spheres $Y_i$ containing $\alpha_i$ in their interior are all disjoint. We also consider another pair of pants decomposition $\mathcal{P}'$, obtained from $\mathcal{P}$ by performing elementary moves about all the curves $\alpha_i$. For every $i$, let $\alpha_i'$ be the curve replacing $\alpha_i$ in $\mathcal{P}'$. Finally, we choose a base complex structure $X_0$ on $S$ satisfying the following:
\begin{enumerate}
\item $l_{X_0}(\alpha_i) \to 0$ as $i\to\infty$;
\item for all $i$ the geodesics in the classes of $\alpha_i$ and $\alpha_i'$ intersect perpendicularly;
\item the lengths of all the curves of the decomposition $\mathcal{P}$ are bounded above by some global constant $M$. 
\end{enumerate}
For a real number $t$, we denote by $X_i$ the surface obtained by a time-$t$ Fenchel-Nielsen twist of $X_0$ along $\alpha_i$. 
 
We have the following:
\begin{theorem}
If $T$ is the space $\mathcal{T}_{qc}(X_0) \cap \mathcal{T}_{FN,\mathcal{P}}(X_0)$, then the identity map from $(T,d_{FN,\mathcal{P}'})$ to $(T, d_{FN,\mathcal{P}})$ is not continuous. More precisely, using the above notation, the surfaces $X_i$ are in $T$, and they satisfy
$$d_{FN,\mathcal{P}}(X_0, X_i) = |t|, \mbox{ while } \lim_{i \to \infty} d_{FN,\mathcal{P}'}(X_0, X_i) = 0.$$
\end{theorem}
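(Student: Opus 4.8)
The plan is to construct the surfaces $X_i$ explicitly and then verify the two displayed formulas separately. First I would use the assumption that the one-holed tori or four-holed spheres $Y_i$ are pairwise disjoint: this means that the time-$t$ twist along $\alpha_i$ only alters Fenchel-Nielsen parameters of curves contained in (or on the boundary of) $Y_i$, and leaves every other coordinate of $X_0$ untouched. Consequently, measuring $d_{FN,\mathcal{P}}(X_0,X_i)$ reduces to a local computation inside $Y_i$: the length parameters of all curves of $\mathcal{P}$ are unchanged by a twist along $\alpha_i$, and among the twist parameters of $\mathcal{P}$ only $\tau_{X_i}(\alpha_i)$ changes, increasing by exactly $t$. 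Hence the supremum defining $d_{FN,\mathcal{P}}(X_0,X_i)$ equals $|t|$, which gives the first formula.

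The second formula is where the real work lies, and it is exactly the place where Lemma \ref{lemma:othertwist} (together with Proposition \ref{prop:length-twist}) is needed. I would argue as follows. Passing to the decomposition $\mathcal{P}'$, the twist along $\alpha_i$ now affects several coordinates associated to curves in $Y_i$: the length $l'_{X_i}(\alpha_i')$, the twist $\tau'_{X_i}(\alpha_i')$, and the twist parameters $\tau'_{X_i}(C)$ of the boundary curves $C$ of $Y_i$. Every coordinate outside $Y_i$ is unaffected, and the length coordinates of the boundary curves of $Y_i$ are unchanged (a twist preserves the lengths of the pants curves bounding the region it acts in). So $d_{FN,\mathcal{P}'}(X_0,X_i)$ is the maximum of finitely many contributions, all localized in $Y_i$: the log-length ratio of $\alpha_i'$, the twist difference of $\alpha_i'$, and the twist differences of the boundary curves $C$ of $Y_i$. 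By Proposition \ref{prop:length-twist} the first two are bounded by $K/|\log \epsilon_i|$ and $M\epsilon_i$ respectively, where $\epsilon_i = l_{X_0}(\alpha_i)$; by Lemma \ref{lemma:othertwist} the boundary twist differences tend to $0$ as $l_{X_0}(\alpha_i)\to 0$.

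The crucial point, which I expect to be the main obstacle, is the need to make these estimates \emph{uniform} over the infinitely many regions $Y_i$. Because $S$ has infinite type, $d_{FN,\mathcal{P}'}(X_0,X_i)$ is a supremum over infinitely many curves, but for a fixed $i$ the twist along $\alpha_i$ is supported in $Y_i$, so the supremum defining $d_{FN,\mathcal{P}'}(X_0,X_i)$ is really over the finitely many curves inside $\overline{Y_i}$. Thus for each fixed $i$ the distance is a genuine maximum of finitely many terms, and I would invoke hypothesis (3) — that all pants curves of $\mathcal{P}$ have length bounded above by the global constant $M$ — to ensure that the constants $K$ and $M$ produced by Proposition \ref{prop:length-twist} and the convergence rate in Lemma \ref{lemma:othertwist} are controlled by $M$ and $|t|$ alone, independently of $i$. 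Combining hypothesis (1), $l_{X_0}(\alpha_i)=\epsilon_i\to 0$, with these uniform bounds forces each term in $d_{FN,\mathcal{P}'}(X_0,X_i)$ to $0$, giving $\lim_{i\to\infty} d_{FN,\mathcal{P}'}(X_0,X_i)=0$.

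Finally, I would record the discontinuity conclusion: each $X_i$ lies in $T$ because $d_{FN,\mathcal{P}}(X_0,X_i)=|t|<\infty$ places $X_i$ in $\mathcal{T}_{FN,\mathcal{P}}(X_0)$, and the quasiconformal finiteness (hence membership in $\mathcal{T}_{qc}(X_0)$) follows since a single bounded twist in a region with boundary lengths bounded by $M$ yields a bounded quasiconformal distance, by the same reasoning used in the preceding theorem. Since $X_i\to X_0$ in the $d_{FN,\mathcal{P}'}$ metric while $d_{FN,\mathcal{P}}(X_0,X_i)$ stays equal to the fixed positive number $|t|$, the identity map from $(T,d_{FN,\mathcal{P}'})$ to $(T,d_{FN,\mathcal{P}})$ fails to be continuous at $X_0$.
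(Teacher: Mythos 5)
Your proposal is correct and follows essentially the same route as the paper's proof: both reduce $d_{FN,\mathcal{P}}(X_0,X_i)=|t|$ to the locality of the twist, bound the length and twist changes of $\alpha_i'$ via Proposition \ref{prop:length-twist}, and control the boundary twist parameters of $Y_i$ via Lemma \ref{lemma:othertwist}. Your added remarks on uniformity of the constants in $i$ (via the global bound on pants-curve lengths) and on the membership $X_i\in T$ are points the paper leaves implicit, but they do not change the argument.
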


\begin{proof}
Assume that $l_{X_0}(\alpha_i)=\epsilon_i \to 0$. It is clear that $d_{FN, \mathcal{P}}(X_0,X_i)=|t|$. Applying Proposition \ref{prop:length-twist}, we have
$$|\log \frac{l_{X_i}(\alpha_i')}{l_{X_0}(\alpha_i')}| \leq \frac {K}{|\log \epsilon_i|},$$
and
$$|\tau'_{X_i}(\alpha_i')-\tau'_{X_0}(\alpha_i')|\leq K\epsilon_i,$$
where $K$ is a constant depending on $M$.
It follows from Lemma \ref{lemma:othertwist}  that for each $C_j \in \mathcal{P}'\setminus \{\alpha_i\}$,
$$\lim_{i\to\infty}\sup_{C_j\in \mathcal{P}'\setminus \{\alpha_i\}}|\tau'_{X_i}(C_j)-\tau'_{X_0}(C_j)|=0.$$
As a result,
$$\lim_{i\to\infty}d_{FN, \mathcal{P}'}(X_0,X_i)= 0.$$
\end{proof}

\begin{theorem}
If $T$ is the space $\mathcal{T}_{qc}(X_0) \cap \mathcal{T}_{FN,\mathcal{P}'}(X_0)$, then the identity map from $(T,d_{FN,\mathcal{P}'})$ to $(T, d_{qc})$ is not continuous. More precisely, the surfaces $X_i$ defined above are in $T$, and there exists a constant $C > 0$ such that
$$d_{qc}(X_0, X_i) \geq C|t|, \mbox{ while } \lim_{i \to \infty} d_{FN,\mathcal{P}'}(X_0, X_i) = 0.$$  
\end{theorem}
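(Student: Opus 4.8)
The plan is to combine the quasiconformal lower bound used in the second theorem of this section with the Fenchel-Nielsen convergence used in the theorem immediately preceding.

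First I would verify that each $X_i$ lies in $T$ and establish the convergence $d_{FN,\mathcal{P}'}(X_0,X_i)\to 0$. Since $X_i$ is obtained from $X_0$ by a single time-$t$ Fenchel-Nielsen twist along $\alpha_i$, it is a quasiconformal deformation of $X_0$ and $d_{qc}(X_0,X_i)$ is finite, so $X_i\in\mathcal{T}_{qc}(X_0)$; moreover the twist is supported in $Y_i$, hence it alters only the finitely many $\mathcal{P}'$-coordinates of the curves meeting the closure of $Y_i$, and only by finite amounts, so $d_{FN,\mathcal{P}'}(X_0,X_i)$ is finite and $X_i\in\mathcal{T}_{FN,\mathcal{P}'}(X_0)$. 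Writing $\epsilon_i=l_{X_0}(\alpha_i)\to 0$, the convergence $d_{FN,\mathcal{P}'}(X_0,X_i)\to 0$ then follows exactly as in the preceding theorem: Proposition \ref{prop:length-twist} bounds the change of the $(l',\tau')$-coordinates of $\alpha_i'$ by $K/|\log\epsilon_i|$ and $K\epsilon_i$, Lemma \ref{lemma:othertwist} shows that the change of the twist coordinates of the boundary curves of $Y_i$ tends to $0$, and all remaining $\mathcal{P}'$-coordinates are unchanged.

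The genuinely new point is the lower bound $d_{qc}(X_0,X_i)\geq C|t|$ with $C>0$ independent of $i$. Because $X_i$ is a time-$t$ twist along the curve $\alpha_i\in\mathcal{P}$, it differs from $X_0$ only in the twist coordinate of $\alpha_i$, and by exactly $t$; hence $d_{FN,\mathcal{P}}(X_0,X_i)=|t|$. I would then invoke the local bi-Lipschitz comparison between $d_{qc}$ and $d_{FN,\mathcal{P}}$ of Theorem 7.6 in \cite{ALPSS}. Its upper-boundedness hypothesis is met directly here by assumption (3), which gives $l_{X_0}(C)\leq M$ for every $C\in\mathcal{P}$, and the same bound for $X_i$ since twisting does not change lengths. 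This produces a constant $C>0$, depending only on $M$ and $|t|$, with $d_{qc}(X_0,X_i)\geq C\,d_{FN,\mathcal{P}}(X_0,X_i)=C|t|$; in particular $X_i\in T$.

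Putting the two estimates together, $d_{qc}(X_0,X_i)\geq C|t|>0$ for every $i$ while $d_{FN,\mathcal{P}'}(X_0,X_i)\to 0$, so the identity map $(T,d_{FN,\mathcal{P}'})\to (T,d_{qc})$ carries a sequence converging to $X_0$ in the source to one that stays bounded away from $X_0$ in the target, and is therefore discontinuous at $X_0$. I expect the main obstacle to be guaranteeing that the lower-bound constant $C$ is uniform in $i$, even though the twisting geodesics $\alpha_i$ become arbitrarily short along the sequence. As in the second theorem, this is settled by checking with Lemma 8.3 in \cite{ALPSS} that $d_{qc}(X_0,X_i)$ is bounded above by a constant depending only on $M$ and $|t|$, so that all the $X_i$ remain in a fixed bounded region on which the bi-Lipschitz constant of Theorem 7.6 is uniform; because we twist along one curve $\alpha_i$ at a time and $X_0$ is already upper bounded, this step is more direct here than in the second theorem, where the surfaces were not assumed upper bounded.
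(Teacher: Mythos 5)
Your proposal is correct and takes essentially the same route as the paper: the lower bound $d_{qc}(X_0,X_i)\geq C|t|$ comes from Theorem 7.6 of \cite{ALPSS}, whose upper-boundedness hypothesis is now satisfied thanks to condition (3) on $X_0$, with the required uniform upper bound on $d_{qc}(X_0,X_i)$ supplied by Lemma 8.3 of \cite{ALPSS}, while the convergence $d_{FN,\mathcal{P}'}(X_0,X_i)\to 0$ is obtained exactly as in the preceding theorem via Proposition \ref{prop:length-twist} and Lemma \ref{lemma:othertwist}. Your explicit verification that each $X_i$ lies in $T$ and your remark on the uniformity of $C$ in $i$ merely spell out points the paper leaves implicit.
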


\begin{proof}
As before, we claim that there exists a constant $C$, depending only on the constant $M$ and on $|t|$, such that $d_{qc}(X_i, X_i^t) \geq C d_{FN,\mathcal{P}}(X_i, X_i^t) = C|t|$. This follows from Theorem 7.6 in \cite{ALPSS}. This time all the hypotheses of that theorem are satisfied, and we only need to check that $d_{qc}(X_i, X_i^t)$ is bounded by something that depends only on $L$ and $|t|$. This is given by Lemma 8.3 in \cite{ALPSS}.  
\end{proof}

\end{document}